\documentclass[review,3p,sort]{elsarticle}


\usepackage[hidelinks]{hyperref}
\usepackage{amsmath,amssymb,amsthm}
\usepackage{mathtools}	
\usepackage{stmaryrd} 
\usepackage{accents}
\usepackage{subfig}
\usepackage{booktabs}
\usepackage{epstopdf}
\usepackage[utf8]{inputenc}
\usepackage{longtable}
\usepackage{arydshln} 
\usepackage{xargs}    

\usepackage[colorinlistoftodos,prependcaption,textsize=tiny]{todonotes}
\newcommandx{\unsure}[2][1=]{\todo[linecolor=blue,backgroundcolor=blue!25,bordercolor=blue,#1]{#2}}
\newcommandx{\changeThis}[2][1=]{\todo[linecolor=red,backgroundcolor=red!25,bordercolor=red,#1]{#2}}

\newtheorem{theorem}{Theorem}[section]
\newtheorem{lemma}[theorem]{Lemma}

\theoremstyle{definition}

\theoremstyle{remark}
\newtheorem{remark}[theorem]{Remark}
%
%
\newcommand\oneHalf{\frac{1}{2}}
\newcommand{\geom}{\textrm{geo}}
\newcommand{\geo}[1]{\ensuremath{\left\{\hspace*{-3pt}\left\{#1\right\}\hspace*{-3pt}\right\}^\geom}}



\newcommand\spacevec[1]{\accentset{\,\rightarrow}{#1}}		
\newcommand\statevec[1]{\mathbf #1}					

\newcommand\mmatrix[1]{\underbar{\textnormal{#1}}} 

\newcommand\dS{\,\operatorname{dS} }

\newcommand\ent{\varepsilon}
\newcommand\ext{\textrm{ext}}
\newcommand\energy{\mathcal{E}}

\newcommand\TMat{\mmatrix{T}} 
\newcommand\SMat{\mmatrix{S}} 
\newcommand\MMat{\mmatrix{M}} 
\newcommand\invMMatT{\mmatrix{M}^{-T}} 
\newcommand\NMat{\mmatrix{N}} 
\newcommand\IMat{\mmatrix{I}} 
\newcommand\AMat{\mmatrix{A}} 
\newcommand\IMinus{\IMat^{-}} 
\newcommand\IPlus{\IMat^{+}} 


\newcommand\LambMat{\mmatrix{$\Lambda$}} 
\newcommand\LambPlus{\LambMat^{+}} 
\newcommand\LambMinus{\LambMat^{-}} 
\newcommand{\absLambM}{|\LambMinus|}
\newcommand{\sqrtLambM}{\sqrt{\absLambM}}
\newcommand{\absLambMExt}{|\LambMinus_{\ext}|}

\newcommand\Wvec{\statevec{W}} 
\newcommand\Wplus{\Wvec^{+}} 
\newcommand\Wminus{\Wvec^{-}} 
\newcommand\WminusExt{\Wvec^{-}_{\ext}} 

\newcommand\Zvec{\statevec{Z}} 

\newcommand\Uvec{\statevec{U}}  
\newcommand\Uprimvec{\statevec{U}_{prim}}  
\newcommand\Vvec{\statevec{V}}  
\newcommand\Uext{\statevec{U}_{\ext}} 

\newcommand\Gvec{\statevec{G}} 


\newcommand\hext{h_{\ext}}
\newcommand\uext{u_{\ext}}
\newcommand\cext{c_{\ext}}

\newcommand\vnext{v_n^{\ext}}
\newcommand\vtext{v_{\tau}^{\ext}}

\newcommand\revOne[1]{\textcolor{black}{#1}}
\newcommand\revTwo[1]{\textcolor{black}{#1}}


\journal{Journal of Computational Physics}

\newcommand{\orcid}[1]{\href{https://orcid.org/#1}{\texorpdfstring{\includegraphics[width=10pt]{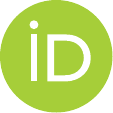}}}}

\numberwithin{equation}{section}


\begin{document}

\begin{frontmatter}


\title{Numerical boundary flux functions that give provable bounds for nonlinear initial boundary value problems with open boundaries}

\author[liu]{Andrew R. Winters\corref{cor1}\orcid{0000-0002-5902-1522}}
\ead{andrew.ross.winters@liu.se}
\cortext[cor1]{Corresponding author}

\author[fl,ca]{David A. Kopriva\orcid{0000-0002-8076-0856}}

\author[liu,sa]{Jan Nordstr\"{o}m\orcid{0000-0002-7972-6183}}

\address[liu]{Department of Mathematics, Applied Mathematics, Link\"{o}ping University, 581 83 Link\"{o}ping, Sweden}

 \address[fl]{Department of Mathematics, The Florida State University, Tallahassee, FL 32306, USA}

\address[ca]{Computational Science Research Center, San Diego State University, San Diego, CA, USA}

\address[sa]{Department of Mathematics and Applied Mathematics, University of Johannesburg,
 P.O. Box 524, Auckland Park 2006, South Africa.}
 
\begin{abstract}
We present a strategy for interpreting nonlinear, characteristic-type penalty terms as numerical boundary flux functions that provide provable bounds for solutions to nonlinear hyperbolic initial boundary value problems with open boundaries.
This approach is enabled by recent work that found how to express the entropy flux as a quadratic form defined by a symmetric boundary matrix.
The matrix formulation provides additional information for how to systematically design characteristic-based penalty terms for the weak enforcement of boundary conditions.
A special decomposition of the boundary matrix is required to define an appropriate set of characteristic-type variables.
The new boundary fluxes are directly compatible with high-order accurate split form discontinuous Galerkin spectral element and similar methods and guarantee that 
the solution is entropy stable and bounded solely by external data.
We derive inflow-outflow boundary fluxes specifically for the Burgers equation and the two-dimensional shallow water equations, which are also energy stable.
Numerical experiments demonstrate that the new nonlinear fluxes do not fail in situations where standard boundary treatments based on linear analysis do.
\end{abstract}

\begin{keyword}
Open boundaries \sep Boundary flux function \sep Discontinuous Galerkin spectral element method \sep Nonlinear energy stability \sep Entropy stability \sep Shallow water equations
\end{keyword}

\end{frontmatter}







\section{Introduction}



    Discontinuous Galerkin Spectral Element Methods (DGSEMs) are implemented in numerous packages \cite{KRAIS2021186,schlottkelakemper2021purely,FERRER2023108700,rueda2021entropy,WARUSZEWSKI2022111507}. The methods are attractive because they feature geometric flexibility with high order accuracy. They can also be stabilized by bounding some measure of the solution (e.g. entropy \cite{tadmor1984,Tadmor2003} and/or kinetic energy \cite{jameson2008,pirozzoli2011}) 
    provided that a split form approximation is used in the interior, and that the boundary conditions are implemented stably, e.g. \cite{gassner2018br1,gassner2016split,winters2021}. True energy stability, which requires a bound on all dependent variables, has not been previously shown except when the mathematical entropy is also an energy \cite{nordstrom2022nonlinear,nordstrom2022linear}\revOne{, otherwise additional positivity assumptions as in \cite{tadmor1984,Tadmor2003,svard2025entropy} are necessary}.


    As with Finite Volume schemes, boundary conditions for DGSEMs are implemented through a numerical boundary flux in the form of a Riemann solver by specifying an external state as the data. Wall boundary conditions for the compressible Euler equations of gas-dynamics, for instance, specify the external state as a reflection of the internal one, with no external data required. They have been shown to be entropy bounded depending on the Riemann solver chosen \cite{hindenlangWall}. Inflow-outflow boundary conditions, which are needed in limited area computations and require external data, are usually set by specifying the state variables whose values one wants to enforce.

    The stability of nonlinear inflow-outflow boundary conditions has rarely been studied theoretically. For linear hyperbolic systems, characteristic-type boundary conditions have been shown to be energy stable for general summation-by-parts (SBP) based schemes such as finite difference \cite{sbp1,nordstrom2017roadmap}, finite volume \cite{nordstrom2012weak,nordstrom2003finite}, spectral elements \cite{carpenter2014entropy,carpenter1996spectral}, flux reconstruction \cite{ranocha2016summation}, discontinuous Galerkin \cite{Gassner:2013ij,kopriva2021} and continuous Galerkin schemes \cite{abgrall2020analysis,Hanifa2025},
    meaning that the energy rate is bounded solely in terms of the specified boundary data. However, for non-linear problems, it is not known whether any given numerical boundary flux can ensure that the solution is bounded only by external data, and it has not been clear how to define a numerical boundary flux that does.

    Properly incorporating data into solution bounds is the 
    most crucial part of the analysis at open boundaries.
    In a series of articles, Nordstr\"{o}m~\cite{nordstrom2022nonlinear,nordstrom2023nonlinear,nordstrom2025open,nordstrom2025linear} examined skew-symmetric formulations and applied a non-linear energy method to bound the solution with data under minimal assumptions.
    This nonlinear energy analysis identified nonlinear characteristic-type variables.
    Nonlinearly stable boundary conditions were weakly imposed by penalizing the incoming (nonlinear) characteristic with appropriately scaled data.

    Here, we derive and prove a condition on numerical boundary fluxes for inflow-outflow boundaries that ensures that the mathematical entropy rate is bounded solely by boundary data \revOne{without additional assumptions on solution quantities}. We then derive explicit examples of such flux functions that satisfy these conditions for the Burgers equation and for the shallow water equations, for which entropy boundedness is also energy stability. Finally, we present numerical examples using a DGSEM that show that standard flux functions may work, but can fail, whereas the new fluxes do not.

\section{Brief overview of entropy analysis}\label{sec:entropy}

 To outline the entropy analysis, we consider the system of nonlinear hyperbolic conservation laws 
 \begin{equation}
     \statevec{q}_t + \statevec{f}_x = 0,
     \label{eq:ConservationLaw}
 \end{equation}
where $\statevec{q}$ is the state vector of conserved variables and $\statevec{f}$ is the flux.
The system is solved in a domain $\Omega$ with initial conditions $\statevec{q}(x,0) = \statevec{q}_0(x)$
and boundary conditions $\statevec{q} = \statevec{q}_{\text{ext}}(x,t)$ in $\partial\Omega$.
 
The conservative system \eqref{eq:ConservationLaw} possesses auxiliary conserved quantities.
Notably, the mathematical entropy $S(\statevec{q})$, which is a strictly convex function of the conservative variables.
To reveal the additional conservation law we define the set of entropy variables $\statevec{v} = S_{\statevec{q}}$,
contract \eqref{eq:ConservationLaw} from the left with the entropy variables, and integrate over the domain to have
\begin{equation}
   \int\limits_\Omega\statevec{v}^T\left(\statevec{q}_t + \statevec{f}_x\right)\,\mathrm{d}\Omega = 0.
   \label{eq:weakform}
\end{equation}
From the chain rule, the first term above becomes the time derivative of the entropy function, i.e. $\statevec{v}^T\statevec{q}_t=S_t$. 
On the spatial derivative term in (\ref{eq:weakform}) we integrate-by-parts to generate boundary terms
\begin{equation}
   \int\limits_\Omega S_t \,\mathrm{d}\Omega + \int\limits_{\partial\Omega}\statevec{v}^T\statevec{f} \dS - \int\limits_\Omega \statevec{v}_x^T\statevec{f}\,\mathrm{d}\Omega = 0.
\end{equation}
To weakly incorporate the boundary conditions into the entropy evolution, we introduce a modified flux that is a function of the internal solution state and the external data, $\statevec{f}^*(\statevec{q}, \statevec{q}_{\text{ext}})$, to have
\begin{equation}
   \int\limits_\Omega S_t \,\mathrm{d}\Omega + \int\limits_{\partial\Omega}\statevec{v}^T\statevec{f}^*(\statevec{q}, \statevec{q}_{\text{ext}})\dS - \int\limits_\Omega \statevec{v}_x^T\statevec{f}\,\mathrm{d}\Omega = 0.
\end{equation}
We then integrate-by-parts again to obtain a penalty-type weak imposition of the boundary conditions
\begin{equation}
   \label{eq:entIntermediate}
   \int\limits_\Omega S_t \,\mathrm{d}\Omega + \int\limits_\Omega \statevec{v}^T\statevec{f}_x\,\mathrm{d}\Omega +  \int\limits_{\partial\Omega} \statevec{v}^T\left(\statevec{f}^*(\statevec{q}, \statevec{q}_{\text{ext}}) - \statevec{f}\right)\dS = 0.
\end{equation}
For the middle term above, we invoke that the entropy function $S$ has an associated entropy flux $f^\ent$ that satisfies the compatibility condition \cite{tadmor1984,Tadmor2003} 
$\statevec{v}^T\statevec{f}_{\statevec{q}} = f^\ent_{\statevec{q}}$
so that 
$\statevec{v}^T\statevec{f}_x = \statevec{v}^T\statevec{f}_{\statevec{q}} \statevec{q}_x= f^\ent_{\statevec{q}}\statevec{q}_x = f^\ent_x$.
Thus, the volume contributions of the flux in \eqref{eq:entIntermediate} move onto the boundary to obtain
\begin{equation}
   \label{eq:entIntermediate2}
   \int\limits_\Omega S_t \,\mathrm{d}\Omega +  \int\limits_{\partial\Omega}\left\{f^\ent + \statevec{v}^T\left(\statevec{f}^*(\statevec{q}, \statevec{q}_{\text{ext}}) - \statevec{f}\right)\right\}\dS= 0.
\end{equation}
Finally, we define the rate of change of the total mathematical entropy over the domain by $\energy_t \equiv \int_\Omega S_t \,\mathrm{d}\Omega$
and obtain the final form of the entropy evolution
\begin{equation}
   \label{eq:entFinal}
   \frac{d\energy}{dt} +  \int\limits_{\partial\Omega}\left\{f^\ent + \statevec{v}^T\left(\statevec{f}^*(\statevec{q}, \statevec{q}_{\text{ext}}) - \statevec{f}\right)\right\}\dS= 0.
\end{equation}

From \eqref{eq:entFinal}, we observe that the mathematical entropy can be bounded by its exchange across the physical boundaries.
For instance, in a periodic domain, all boundary terms cancel, yielding $\energy_t = 0$.
That is, the mathematical entropy is conserved.
Although the approach above is somewhat unconventional, introducing a weak imposition of the boundary condition through a modified flux function, $\statevec{f}^*$, highlights the crucial role such fluxes play in bounding the mathematical entropy.
The design of this modified flux function that will eventually become the numerical flux function will be the focus of the remainder of this work.

\section{The split form discontinuous Galerkin spectral element method}\label{sec:DGSEM}

The discontinuous Galerkin spectral element method (DGSEM) is an approximation strategy for hyperbolic conservation laws.
We provide a broad summary of the basic details to construct a nodal DGSEM where complete details can be found in \cite{kopriva2009implementing,winters2021}.
The nodal DGSEM is built from the weak form of the equations where Lagrange interpolating polynomials of degree $N$ are the test functions.
The solution and fluxes are also approximated by polynomials of degree $N$.
Coupling between elements as well as weakly enforcing boundary conditions in the DGSEM is done via a numerical flux function in the normal direction $\statevec{F}_n^*$.
This grants the DGSEM flexibility, as different numerical flux functions can be chosen at internal interfaces and physical boundaries.
The numerical flux at physical boundaries, ideally, enforces dissipative boundary conditions that guarantee stability bounds in terms of external data.
However, numerical flux functions that weakly impose dissipative open boundary conditions for nonlinear problems are lacking.
The design of such boundary flux functions for nonlinear problems that guarantee the solution energy is bounded by external data is the focus of this paper.

Integrals in the weak form are approximated by Legendre-Gauss-Lobatto (LGL) quadrature where the quadrature nodes are collocated with the interpolation nodes for computational efficiency.
This choice of collocation and LGL quadrature results in an approximation with discrete integration and derivative matrices that satisfy the summation-by-parts (SBP) property \cite{gassner2013skew} for any polynomial order.
The movement of flux contributions out of the volume and onto the surface, necessary for the manipulations in Sec.~\ref{sec:entropy}, is possible via derivative approximations that satisfy the SBP property.
As such, the nodal DGSEM on LGL nodes can discretely recover properties, like entropy conservation or kinetic energy preservation, with appropriate choices of split form fluxes.

When split form fluxes are used, the boundedness of the DGSEM approximation depends only on the boundary conditions. Mapped to the reference element, $E$, the total mathematical entropy, $\mathcal{E}$, on an element satisfies an equation that depends only on boundary terms,
\begin{equation}
   \frac{d\energy}{dt} + \int\limits_{\partial E,N} \left\{F_n^{\ent} + \Vvec^T\left(\statevec{F}^*_n - \statevec{F}_n\right)\right\}\hat{s}\dS \le 0.
   \label{eq:EntropyBoundaryIntegral}
\end{equation}
In \eqref{eq:EntropyBoundaryIntegral}, $F_n^{\ent}$ is the entropy flux and $\statevec V$ is the entropy variable state vector.
Polynomials of degree $N$ approximate the normal fluxes $\statevec{F}^*_n$ for the numerical boundary flux (Riemann solver) and $\statevec{F}_n$ for the local normal flux.
The subscript $N$ on the integral symbol denotes the approximation of the integral by Gauss-Lobatto quadrature.
The quantity $\hat s$ is the scaling between the normals in physical and reference space.
Between elements, the entropy conservative or bounded numerical fluxes lead to the bound \eqref{eq:EntropyBoundaryIntegral} that depends only on the fluxes at physical boundaries ~\cite{gassner2016split,winters2021}.

The mathematical entropy is a convex function of the dependent variables associated with a conservation law for smooth solutions.
As previously discussed, this auxiliary evolution equation is not explicitly built into the original equations.
Instead, the form of the mathematical entropy conservation law and its fluxes are determined by contracting with an appropriate set of variables, in this case, $\statevec V$, together with compatibility conditions~\cite{tadmor1984}.
In the examples presented in Secs.~\ref{sec:burgers} and \ref{sec:swe} the total energy is a mathematical entropy function, so, in the following, we will use entropy to denote both quantities.

There is a one-to-one discrete analog comparing \eqref{eq:EntropyBoundaryIntegral} with the continuous result \eqref{eq:entFinal}.
The entropy is bounded if the integrand along physical boundaries in \eqref{eq:EntropyBoundaryIntegral} is bounded by data, and the only freedom is in the choice of the numerical boundary flux.
\revOne{As mentioned in the introduction, for open boundaries, provably bounded nonlinear numerical flux functions for the shallow water equations without additional stabilizing assumptions are unavailable. }
It is not obvious how to discern directly how the entropy flux, $F_n^{\ent}$, can correctly ``balance'' with the numerical flux penalty, $\statevec{F}^*_n - \statevec{F}_n$, to ensure that the boundary integral is bounded by data. To find such a balance is the topic of this paper.

\section{A condition for nonlinearly bounded boundary flux functions}\label{sec:generic}



We derive a condition on the flux function $\statevec F^*$ in \eqref{eq:EntropyBoundaryIntegral} to guarantee entropy boundedness by collecting results from a series of papers on nonlinearly stable boundary conditions found in \cite{nordstrom2022nonlinear,nordstrom2023nonlinear,nordstrom2025open,nordstrom2025linear}. The idea is to write the entropy flux, $F_n^\ent$, in terms of a symmetric boundary matrix $\AMat$ that can be diagonalized by a transformation $\AMat = \TMat\, \LambMat\,\TMat^T$ so that
\begin{equation}
        F_n^{\ent}
   =
   \Uvec^T\AMat\Uvec
   =
   \Wvec^T\LambMat\Wvec,
   \label{eq:entFluxAlt}
\end{equation}
where $\Wvec = \TMat^T\Uvec$ defines a vector of (nonlinear) characteristic variables. 
\revOne{The variables $\Uvec$ come from the skew-symmetric analysis of the governing equation and may represent either primitive variables, as in the incompressible Euler equations, or Roe-type variables involving square roots, as in the shallow water equations \cite{nordstrom2022nonlinear}}.
The sign of the terms in the diagonal matrix $\LambMat$ specifies the minimal number of boundary conditions required to obtain a bound on the solution in terms of external data \cite{gustafsson1995time,nordstrom2022nonlinear, nordstrom2020, nordstrom2023nonlinear}. For linear problems, $\TMat$ is the matrix of eigenvectors and $\LambMat$ is the matrix of eigenvalues of the matrix $\AMat$. For nonlinear problems we will use a congruence transformation \revOne{of $\AMat$ to determine an appropriate transformation $\TMat$ that relates nonlinear and linear theory,} as discussed \revOne{in Section~\ref{sec:congruence}}.

The decomposition in \eqref{eq:entFluxAlt} exposes how solution information propagates in the directions determined by the sign of the terms in the diagonal matrix $\LambMat$, so we decompose $\LambMat = \LambPlus + \LambMinus$, where $\LambMat^\pm = \oneHalf\left(\LambMat \pm |\LambMat| \right)$ into matrices with only positive or negative entries. Possible zero entries are included in $\LambPlus$. We associate outgoing and incoming (nonlinear) characteristic
variables with those positive and negative terms by
\begin{equation}
   \label{eq:char_plus_minus}
   \Wplus = \IPlus\Wvec = \IPlus\TMat^T\Uvec \quad \textrm{and} \quad  \Wminus = \IMinus\Wvec = \IMinus\TMat^T\Uvec,
\end{equation}
where $\IPlus$ and $\IMinus$ are indicator matrices (incomplete identity matrices) that select components according to the signs of the terms in $\LambMat$. 

Again, following \cite{nordstrom2022nonlinear,nordstrom2023nonlinear,nordstrom2025open}, we prove the following purely algebraic result:
\begin{theorem}\label{thm:SATTheorem}
Let $\statevec G$ be a state vector
. Then
\begin{equation}
   \label{eq:SBP-energy}
 \Wvec^T\LambMat\Wvec + \Uvec^T\left(2\TMat\,\IMinus\sqrtLambM\left(\sqrtLambM\Wminus - \Gvec\right)\right)\ge -\statevec G^T\statevec G,
\end{equation}
\end{theorem}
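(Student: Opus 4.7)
The plan is to rewrite everything in terms of the characteristic variables $\Wvec$ using the congruence transformation, split the diagonal matrix $\LambMat$ into its positive and negative parts, and then finish by a completion-of-squares argument.

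First I would use that $\Uvec^T \TMat = (\TMat^T\Uvec)^T = \Wvec^T$ together with the symmetry of the indicator matrix $\IMinus$ to rewrite the cross term as
\begin{equation*}
 \Uvec^T\bigl(2\TMat\,\IMinus\sqrtLambM(\sqrtLambM\Wminus - \Gvec)\bigr)
 = 2(\Wminus)^T\sqrtLambM\bigl(\sqrtLambM\Wminus - \Gvec\bigr)
 = 2(\Wminus)^T\absLambM\Wminus - 2(\Wminus)^T\sqrtLambM\Gvec,
\end{equation*}
where I have used that $\sqrtLambM\IMinus = \sqrtLambM$ (the square root matrix already vanishes outside the incoming components).

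Next I would split $\LambMat = \LambPlus + \LambMinus$. Because $\LambMinus$ only acts on the incoming components and equals $-\absLambM$ there, we have the identity $\Wvec^T\LambMinus\Wvec = -(\Wminus)^T\absLambM\Wminus$. Adding the cross term from the previous paragraph gives
\begin{equation*}
 \Wvec^T\LambMat\Wvec + \Uvec^T\bigl(2\TMat\,\IMinus\sqrtLambM(\sqrtLambM\Wminus - \Gvec)\bigr)
 = \Wvec^T\LambPlus\Wvec + (\Wminus)^T\absLambM\Wminus - 2(\Wminus)^T\sqrtLambM\Gvec.
\end{equation*}

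The main step is then to complete the square in the incoming components. Writing $(\Wminus)^T\absLambM\Wminus = \|\sqrtLambM\Wminus\|^2$ and noticing that $(\sqrtLambM\Wminus)^T\Gvec = (\sqrtLambM\Wminus)^T(\IMinus\Gvec)$, one obtains
\begin{equation*}
 \|\sqrtLambM\Wminus\|^2 - 2(\sqrtLambM\Wminus)^T\Gvec
 = \|\sqrtLambM\Wminus - \IMinus\Gvec\|^2 - \|\IMinus\Gvec\|^2
 \ge -\|\IMinus\Gvec\|^2 \ge -\Gvec^T\Gvec.
\end{equation*}
Combined with the inequality $\Wvec^T\LambPlus\Wvec \ge 0$ (since $\LambPlus$ is positive semidefinite), this yields the claim.

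The only subtle point is keeping track of which vectors live on the full characteristic space and which live on the ``incoming'' subspace; once the bookkeeping with $\IMinus$ is in place, the argument is just a quadratic completion. No compatibility assumption on $\Gvec$ is needed because dropping the positive parts $\Wvec^T\LambPlus\Wvec$ and $\|\sqrtLambM\Wminus - \IMinus\Gvec\|^2$ and replacing $\|\IMinus\Gvec\|^2$ by the looser $\Gvec^T\Gvec$ only weakens the bound.
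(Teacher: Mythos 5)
Your proposal is correct and follows essentially the same route as the paper: split $\LambMat = \LambPlus + \LambMinus$, reduce the SAT term to $2(\Wminus)^T\sqrtLambM\left(\sqrtLambM\Wminus - \Gvec\right)$ via $\Wvec = \TMat^T\Uvec$ and the action of $\IMinus$, then complete the square. The only cosmetic difference is that you complete the square against the projected data $\IMinus\Gvec$, which gives the marginally sharper intermediate bound $-\left(\IMinus\Gvec\right)^T\!\left(\IMinus\Gvec\right)$ before relaxing to $-\Gvec^T\Gvec$, whereas the paper completes the square directly with $\Gvec$; both are valid and yield the stated inequality.
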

\begin{proof}
Splitting in terms of $\Wvec^\pm$,
\begin{equation}
   \label{eq:generic_bound}
   \begin{aligned}
   \Wvec^T\LambMat\Wvec &+ \Uvec^T\left(2\TMat\,\IMinus\sqrtLambM\left(\sqrtLambM\Wminus - \Gvec\right)\right)\\[0.1cm]
   &= (\Wplus)^T\LambPlus\Wplus - (\Wminus)^T\absLambM\Wminus + 2\Uvec^T\TMat\,\IMinus\sqrtLambM\left(\sqrtLambM\Wminus - \Gvec\right)\\[0.1cm]
   &= (\Wplus)^T\LambPlus\Wplus - (\Wminus)^T\absLambM\Wminus + 2(\Wminus)^T\sqrtLambM\left(\sqrtLambM\Wminus - \Gvec\right).
   \end{aligned}
\end{equation}
Completing the square yields
\begin{equation}
   \label{eq:generic_bound2}
   \begin{aligned}
   (\Wplus)^T\LambPlus\Wplus &- (\Wminus)^T\absLambM\Wminus + 2(\Wminus)^T\sqrtLambM\left(\sqrtLambM\Wminus - \Gvec\right)
   \\[0.1cm]
   &= (\Wplus)^T\LambPlus\Wplus + (\Wminus)^T\absLambM\Wminus - 2 (\Wminus)^T\sqrtLambM\Gvec \pm \Gvec^T\Gvec\\[0.1cm]
   &= -\Gvec^T\Gvec + (\Wplus)^T\LambPlus\Wplus + (\Wminus)^T\absLambM\Wminus - 2 (\Wminus)^T\sqrtLambM\Gvec + \Gvec^T\Gvec\\[0.1cm]
   &= -\Gvec^T\Gvec + (\Wplus)^T\LambPlus\Wplus + \left(\sqrtLambM\Wminus - \Gvec\right)^{\!T}\!\left(\sqrtLambM\Wminus - \Gvec\right) \ge -\statevec G^T\statevec G,
   \end{aligned}
\end{equation}
which proves the result.
\end{proof}

\revOne{Now let $\statevec{G}$ consist of accurate boundary data in the form of the boundary operator $\sqrtLambM\Wminus$ such that $\sqrtLambM\Wminus - \Gvec$ is small, and interpret $2\TMat\,\IMinus\sqrtLambM\left(\sqrtLambM\Wminus - \Gvec\right)$ as the simultaneous approximation term (SAT) \cite{nordstrom2017roadmap}. Then}

\begin{theorem}
    \label{thm:FluxTheorem}
    If the numerical boundary flux function $\statevec{F}^*_n$ satisfies
    \begin{equation}
       \label{eq:generic_relation}
          \Vvec^T\left(\statevec{F}^*_n - \statevec{F}_n\right)
          =
          \Uvec^T\left(2\TMat\,\IMinus\sqrtLambM\left(\sqrtLambM\Wminus - \Gvec\right)\right),
    \end{equation}
    at inflow/outflow boundaries with external data specified as the vector $\statevec G$, then 
    \begin{equation}\label{eq:desiredResult}
        \frac{d\mathcal E}{dt}\le \int\limits_{\partial E,N} \statevec G^T\statevec{G}\hat s\dS
    \end{equation}
    so that the entropy rate is bounded solely by data.
\end{theorem}
\begin{proof}
    Starting from \eqref{eq:EntropyBoundaryIntegral}, we rewrite the entropy flux in the normal direction according to \eqref{eq:entFluxAlt} to have
    \begin{equation}
       \label{eq:ent_dgsem}
       \frac{d\energy}{dt} + \int\limits_{\partial E,N} \left\{\Wvec^T\LambMat\Wvec + \Vvec^T\left(\statevec{F}^*_n - \statevec{F}_n\right)\right\}\hat{s}\dS = 0.
    \end{equation}
    We compare the terms in \eqref{eq:ent_dgsem} with \eqref{eq:SBP-energy} and \eqref{eq:generic_relation} to arrive at the desired result \eqref{eq:desiredResult}.    
\end{proof}

\begin{remark}
   The statement \eqref{eq:generic_relation} is an algebraic relationship between the (possibly) vector valued numerical flux function $\statevec{F}^*_n$
   and the nonlinearly stable SAT from Thm.~\ref{thm:SATTheorem}.
   This is similar in spirit to creating an entropy conservative numerical flux function, see, e.g., \cite{Tadmor2003,winters2021}.
   Once a set of characteristic-type variables $\Wvec$ and the corresponding scaling matrix $\LambMat$ are identified, it is an algebraic exercise from \eqref{eq:generic_relation} to determine the components of the numerical flux function.
\end{remark}

\revOne{Theorems~\ref{thm:SATTheorem} and \ref{thm:FluxTheorem} illustrate that $\Gvec$ contains the external boundary data needed to balance the boundary operator $\sqrtLambM\Wminus$ and acquire a bound.
For complete details on the role of $\Gvec$ in the nonlinear SATs see~\cite{nordstrom2022nonlinear,nordstrom2023nonlinear,nordstrom2025open,nordstrom2025linear}.
For the Burgers equation and the shallow water equations specific forms of $\Gvec$ and how they relate to the numerical flux functions will be derived in Secs.~\ref{sec:burgers_first} and \ref{sec:swe_BCS}, respectively. 
}

To find a suitable numerical boundary flux function, one finds a transformation \eqref{eq:entFluxAlt}, where $\LambMat$ has the correct number of positive and negative entries to be consistent with linear theory. The condition \eqref{eq:generic_relation} uses two variable sets, $\statevec{V}$ and $\statevec{U}$,
so there is a potential variable mismatch for the contraction. If they are the same, then $\statevec{F}^*_n$ can be computed directly, as described for a one dimensional example in Sec.~\ref{sec:burgers}.
If they are different, we relate the variable sets $\statevec{V}$ and $\statevec{U}$ to each other, which we demonstrate for a two dimensional nonlinear system in Sec.~\ref{sec:swe}.
After the variable discrepancy is resolved, we show how to construct the desired numerical boundary flux function.
\revTwo{
\begin{remark}
   The relation \eqref{eq:generic_relation} yields a valid numerical flux provided there exists a common set of variables, $\Zvec$, between the entropy variables $\Vvec$ and the variables $\Uvec$.
   The three variable sets constitute different representations of the same physical state and are related by invertible transformations.
   It is always possible to find $\Vvec(\Zvec) = \mmatrix{M}_{\Vvec}(\Zvec)\Zvec$ and $\Uvec(\Zvec) = \mmatrix{M}_{\Uvec}(\Zvec)\Zvec$ to rewrite the variables in terms of the common variables $\Zvec$ via an outer product representation \cite{horn2012}, e.g.,
   \begin{equation}
      \mmatrix{M}_{\Vvec}(\Zvec) = \frac{\Vvec\Zvec^T}{\Zvec^T\Zvec}\quad\text{so that}\quad \mmatrix{M}_{\Vvec}(\Zvec)\Zvec = \frac{\Vvec\Zvec^T}{\Zvec^T\Zvec}\Zvec = \Vvec.
   \end{equation}
   Importantly, the matrices $\mmatrix{M}_{\Vvec}(\Zvec)$ and $ \mmatrix{M}_{\Uvec}(\Zvec)$ are not unique and their entries may depend nonlinearly on $\Zvec$.
   Note that these matrices are \textit{not} Jacobians; they describe the variable transformation itself, rather than its differential.
   Substituting the transformations, 
   \eqref{eq:generic_relation} becomes
   \begin{equation}
          (\mmatrix{M}_{\Vvec}\Zvec)^T\left(\statevec{F}^*_n - \statevec{F}_n\right)
          =
          (\mmatrix{M}_{\Uvec}\Zvec)^T\left(2\TMat\,\IMinus\sqrtLambM\left(\sqrtLambM\Wminus - \Gvec\right)\right).
   \end{equation}
   Manipulating this expression we find the provably stable numerical boundary flux function takes the form
   \begin{equation}\label{eq:unikNumFlux}
      \statevec{F}^*_n= 2\mmatrix{M}_{\Vvec}^{-T}\mmatrix{M}_{\Uvec}^T\,\TMat\,\IMinus\sqrtLambM\left(\sqrtLambM\Wminus - \Gvec\right) + \statevec{F}_n,
   \end{equation}
   provided that the characteristic-type variables $\Wvec$ and diagonal matrix $\LambMat$ can be found.\\
\end{remark}
}
\revTwo{In Sec.~\ref{sec:swe}, we use the primitive variables as a common variable set $\Zvec$ for the two-dimensional shallow water equations.
We then choose a diagonal matrix $\LambMat$ and describe how to create a transformation of the boundary matrix to reveal a set of characteristic-type variables.
Then, nonlinear stable boundary fluxes can be determined.
To summarize, once a choice for a common variable set is made, with corresponding transformations $\mmatrix{M}_{\Vvec}(\Zvec)$ and $\mmatrix{M}_{\Uvec}(\Zvec)$, and a suitable transformation matrix $\TMat$ in the decomposition of the boundary matrix is created for a chosen diagonal matrix $\LambMat$, then the numerical flux is uniquely determined from \eqref{eq:unikNumFlux}.
}

 
\section{A stable nonlinear inflow boundary flux for the Burgers equation}\label{sec:burgers}

To illustrate the process and to motivate and highlight the vital steps, we demonstrate the use of Thm.~\ref{thm:FluxTheorem} by deriving an energy stable boundary flux for the Burgers equation. \revTwo{Although nonlinear, the Burgers equation is sufficiently simple that the boundary flux function in \eqref{eq:EntropyBoundaryIntegral} can be derived via the energy method and a nonstandard rewriting of the boundary terms.}
We show here that the derivation \revTwo{using the energy method} gives the same result as Thm.~\ref{thm:FluxTheorem}. 

\subsection{\revTwo{Boundary flux derivation for Burgers' equation}}\label{sec:burgers_first}
In one space dimension, the Burgers equation is
\begin{equation}
u_t + f_x  = 0 \quad x\in \Omega = [x_L, x_R]
\label{eq:BasicBurgers}
\end{equation}
where the flux is $f(u) = \frac{1}{2}u^2 $. 
The entropy, $\energy = \oneHalf u^2$, is also the energy for the Burgers equation, which allows us to prove energy stability.
The entropy variable $\partial \energy/\partial u = u$, which is also the state variable, leads to an entropy flux $f^\ent = \frac{1}{3}u^3$.

We first seek boundary conditions for the PDE that ensure that the entropy is bounded by data. It is sufficient for this example to assume $u > 0$ and only consider the boundary condition at $x_{L}$. Multiplying by the entropy variable $u$ and integrating over the domain, the total entropy/energy satisfies 
\begin{equation}
\energy_t = -\left. \frac{1}{3}u^3\right|_{x_R} +  \left. \frac{1}{3}u^3\right|_{x_L}.
\end{equation}
 When $u > 0$, $\left. \frac{1}{3}u^3\right|_{x_R} > 0$ and has the correct sign to dissipate energy, so no boundary condition is needed at $x_R$. At the left boundary, if we ensure that the contribution is a square of specified data, i.e., if the contribution from the left (inflow) side is equal to some $G^{2}$, where $G(t)$ is a function solely of specified data, then the energy is bounded exclusively by data, i.e.,
$\energy_{t} \le G^{2}$ and $\energy(T) \le \int_{0}^{T} G^{2}dt$.

To get the square and ensure boundedness, then, we split the boundary term as
\begin{equation}
\frac{1}{3}u^3=u \left(\frac{1}{3}|u|\right) u = u\left(\sqrt{ \frac{1}{3}|u| }\right)\left(\sqrt{ \frac{1}{3}|u| }\right) u = \left( \sqrt{ \frac{1}{3}|u| } u \right)^{2} \equiv \overline W ^{2}.
\end{equation}
In contrast to the characteristic variable $W=u$, $\overline W$ includes the wave speed.
Therefore, if we set the boundary condition at the left as
\begin{equation}
G = \overline W(\uext) = \sqrt{ \frac{1}{3}|\uext | }\uext,
\label{eq:StrongBurgersCondition}
\end{equation}
then $BT \le G^{2}$ where $G$ is a function only of boundary data. This means that, unlike in the linear analysis, one does not specify the solution at the boundary, but \eqref{eq:StrongBurgersCondition} instead, which includes the wave speed.


Since \eqref{eq:EntropyBoundaryIntegral} imposes the boundary conditions in weak form as a penalty, we need the appropriate penalty that ensures that $\mathcal E$ is bounded by data in the same way. From \eqref{eq:StrongBurgersCondition}, we penalize the PDE by adding a term proportional to $\overline W - G$, where $G$ is defined in \eqref{eq:StrongBurgersCondition},
\begin{equation}
u_t + f_x + \left.\sigma l(x - x_L)\left(\overline W-G\right)\right|_{x_L} = 0,
\label{eq:BurgStrongPenalty}
\end{equation}
and $l(x)$ is a lifting operator defined as
$\int_{x_L}^{x_R} \phi l(\psi)dx = \phi\psi|_{x_L}$ \cite{arnold2002unified,sudirham2003dgfem}.
The penalty is on the characteristic variable $\overline W$, which, again, differs from linear problems where the dependent  variable $u$ is penalized towards the external solution, $\uext$.

We must then find $\sigma$ so that the entropy is bounded only by $G$.
When we multiply by the entropy variable $u$ and integrate over the domain, the total entropy satisfies
\begin{equation}
\energy_t +  \left. f^\ent\right|_{x_L}^{x_R}  +  \left.\sigma u\left(\overline W-G\right)\right|_{x_L} = 0.
\label{eq:EntropyWithPenalty}
\end{equation}
As before, we take $u > 0$ so that we can ignore the boundary on the right and specify the data on the left. At the left boundary, we must bound 
\begin{equation}
\label{eq:sigmaEq}
\begin{split}
\energy_{t} \le BT 
\equiv \left. f^{\ent} +\sigma u\left(\overline W-G\right) \right|_{x_L}
= \frac{1}{3}u^{3}+\sigma u\left(\overline W-G\right)
&=  \left( \sqrt{ \frac{1}{3}|u| } u\right)^{2} +\sigma u \overline W-\sigma u G
\\[0.1cm]
&= \overline W^2 + \sigma u \overline W- \sigma u G.
\end{split}
\end{equation}
We can bound $BT \le G^2$ if we choose 
$\sigma u = \left(-2\sqrt{ \frac{1}{3}|u| }\right)u = -2\overline W$ and complete the square, giving
\begin{equation}
BT = -\overline W^{2} +2\overline W G -G^2 + G^2 =- (\overline W- G)^2 + G^2 \le G^2.
\end{equation}
Note, this choice of $\sigma$ is analogous to the generic penalty statement in Thm.~\ref{thm:SATTheorem}.


Finally, we turn to the construction of the numerical boundary flux, $F^*$.
When we compare the boundary term in the formulation \eqref{eq:EntropyBoundaryIntegral} with \eqref{eq:EntropyWithPenalty}, the bounds match if
\begin{equation}
    \label{eq:BurgersBT}
    BT = F^{\ent} + u (F^{*}-F) = F^{\ent} +\sigma u\left(\overline W-G\right).
\end{equation}
That is, we need $F^{*}$ such that
\begin{equation}
    F^* = \sigma(\overline W-G) + F.
\end{equation}
When we substitute for $\overline W$, $G$, $F$, and $\sigma$, use the fact that $u > 0$, and rearrange, we find the numerical boundary flux function that satisfies Thm. \ref{thm:FluxTheorem} and ensures that the entropy is bounded only by data to be
\begin{equation}
    \label{eq:num_burgers}
    F^*(\uext,u) = \frac{1}{3}\left( 2 \uext\sqrt{|\uext| |u|} -\oneHalf u^{2}  \right).
\end{equation}

\begin{remark}
    The numerical boundary flux uses the geometric mean of the interior solution and the data.
\end{remark}
\begin{remark}
If one uses the entropy conservative two-point flux $(\uext^{2} + u \uext + u^{2})/6$ \cite{gassner2013skew} as the numerical boundary flux in \eqref{eq:BurgersBT}, then with $u > 0$
\begin{equation}
BT = F^{\ent} + u (F^{*}-F) = u \left(F^{*}-\frac{1}{3}F\right) = u \left( \frac{\uext^{2} + u \uext + u^{2}}{6} - \frac{u^{2}}{6}  \right) = \frac{1}{6}\left( u \uext^{2}  + u^{2}\uext\right).
\label{eq:BTWithFecFlux}
\end{equation}
This boundary term is not a square bounded by data independent of $u$, and hence does not lead to an estimate solely in terms of data.
If, instead, we use the local Lax-Friedrichs (LLF) flux \cite{toro2013riemann} at the boundary, then
    \begin{equation}\label{eq:brokenLLF}
     BT = \left\{
     \begin{aligned}
        &-\frac{u^3}{6} + \frac{u \uext^2}{2} - \frac{u(u - \uext)^2}{4}, \quad u >  \uext\\[0.2cm]
        &-\frac{u^3}{6} + \frac{u \uext^2}{2} + \frac{u(u - \uext)^2}{4}, \quad u < \uext.
    \end{aligned}
    \right.
    \end{equation}
    Again, the boundary term is not bounded independently of the interior state, $u$, and using the flux \eqref{eq:brokenLLF} does not lead to an entropy bound.
   \label{rem:ECBCFlux}
\end{remark}

\subsection{A derivation using the general theory}\label{sec:burgers_general}
The generic statement \eqref{eq:generic_relation} yields the same numerical boundary flux, provided we can form the (nonlinear) characteristic variables $\Wvec$ by finding the matrix $\TMat$, which then guides the ansatz for boundary data function $\Gvec$.
As before, we look only at the $u > 0$ case. At the left boundary for the scalar equation, $\IMinus \gets 1$.
For the scalar Burgers equation, $\Uvec = \Vvec \gets u$, i.e., the conservative and entropy variables are the same,
so the transformation $\TMat \gets 1$.
Furthermore, the boundary data according to the continuous analysis \eqref{eq:StrongBurgersCondition} is
\begin{equation}
    \Gvec \leftarrow G = \sqrt{ \frac{1}{3}|\uext|} \uext,
    \label{eq:Grepacement}
\end{equation}
together with the characteristic variable and scaling factor,
\begin{equation}
    \Wminus \leftarrow u \quad\text{and}\quad \sqrtLambM \gets \sqrt{\frac{1}{3}|u|}.
    \label{eq:Wreplacement}
\end{equation}
Substituting \eqref{eq:Grepacement} and \eqref{eq:Wreplacement} into \eqref{eq:generic_relation} we find 
\begin{equation}
    -u (f^* - f) = u\left(2\left(\sqrt{\frac{1}{3}|u|}\right)\left(\sqrt{\frac{1}{3}|u|}u -  \sqrt{ \frac{1}{3}|\uext | }\uext\right)\right)
    =u \left(\frac{2}{3}u^2 - \frac{2}{3}\sqrt{|u||\uext|}\uext\right),
\end{equation}
where the negative sign on the left comes from the one dimensional outward pointing normal direction at the left boundary.
Solving for the numerical boundary flux function and recalling that the Burgers flux is $f = u^2/2$, we find
\begin{equation}
    \label{eq:alt_flux_deriv}
    f^*(\uext, u) = \frac{2}{3}\sqrt{|u||\uext|}\uext - \frac{2}{3}u^2 + f
    = \frac{1}{3}\left(2\sqrt{|u||\uext|}\uext - \frac{1}{2}u^2\right).
\end{equation}

Thus, the formalism of \eqref{eq:generic_relation} leads to a numerical boundary flux function \eqref{eq:alt_flux_deriv} that is identical to the one derived from \revTwo{the energy method} in \eqref{eq:num_burgers}.
Note that the creation of the boundary flux function from \eqref{eq:generic_relation} requires knowledge of the characteristic variables $\Wvec$, the wave speeds $\LambMat$, and the boundary terms $\Gvec$.

\revOne{
\begin{remark}\label{rem:Grole}
The final form of the numerical flux derived via the energy method \eqref{eq:num_burgers} or from the generic result of Thm.~\ref{thm:FluxTheorem} in \eqref{eq:alt_flux_deriv} is a function of the external solution state $\uext$.
It does not explicitly contain the boundary data function $\Gvec$, but is derived from it, as shown in \eqref{eq:StrongBurgersCondition} and \eqref{eq:Grepacement}.
\end{remark}
}

\section{Stable nonlinear boundary fluxes for the shallow water equations}\label{sec:swe}

We leverage knowledge gained from the Burgers analysis to create numerical boundary flux functions for 
the shallow water equations in two spatial dimensions with a flat bottom topography.
The governing equations are written in conservative form 
\begin{equation}
    \label{eq:swe_cons}
    \statevec{q}_t + (\statevec{f}_1)_x + (\statevec{f}_2)_y = 0, \quad x\in \Omega,
\end{equation}
with the conservative variables $\statevec{q} = (h, hv_1, hv_2)^T$ and fluxes
\begin{equation}
    \statevec{f}_1
    =
    \begin{pmatrix}
        h v_1\\
        h v_1^2 + \frac{g}{2}h^2\\
        h v_1 v_2
    \end{pmatrix},
    \quad
    \statevec{f}_2
    =
    \begin{pmatrix}
        h v_2\\
        h v_1 v_2\\
        h v_2^2 + \frac{g}{2}h^2
    \end{pmatrix},
\end{equation}
where $h$ is the water height, $g$ is now the gravitational constant,
and $v_1, v_2$ are the velocities in the $x$ and $y$ directions~\cite{LeVeque2002}.
The normal flux at the boundary $\partial\Omega$ with outward normal vector $\spacevec{n} = (n_1, n_2)^T$ is
\begin{equation}
   \label{eq:normal_flux}
   \statevec{f}_n = n_1 \statevec{f}_1 + n_2 \statevec{f}_2
   =
   \begin{pmatrix}
   h v_n\\[0.05cm]
   h v_1 v_n  + \frac{g}{2}h^2 n_1\\[0.05cm]
   h v_2 v_n + \frac{g}{2}h^2 n_2\\[0.05cm]
   \end{pmatrix},
\end{equation}
with the normal velocity $v_n = n_1 v_1 + n_2 v_2$.

The relation \eqref{eq:generic_relation} requires several variable sets and matrices to determine a numerical boundary flux function.
We start from the mathematical entropy analysis of the shallow water equations where the specific total energy plays the role of the entropy function \cite{fjordholm2011well,wintermeyer2017entropy}
\begin{equation}
   \label{eq:ent_func}
   S(\statevec{q}) = \frac{h}{2}(v_1^2 + v_2^2) + \frac{g}{2}h^2.
\end{equation}
The associated entropy flux in the normal direction, found via a compatibility condition \cite{fjordholm2011well,wintermeyer2017entropy}, is
\begin{equation}
   \label{eq:ent_fluxes2}
   F^{\ent}_n= \frac{hv_n}{2}(v_1^2+v_2^2) + g h^2 v_n.
\end{equation}
To contract a conservation law from physical space into entropy space one uses the entropy variables~\cite{tadmor1984}
\begin{equation}
   \label{eq:ent_vars}
   \Vvec = \frac{\partial S}{\partial \statevec{q}}
   =
   \begin{pmatrix}
   g h - \frac{1}{2}(v_1^2 + v_2^2)\\[0.05cm]
   v_1\\[0.05cm]
   v_2\\[0.05cm]
   \end{pmatrix}.
\end{equation}
It is precisely these entropy variables $\Vvec$ that are needed on the left-hand-side of \eqref{eq:generic_relation}.

Analogous to the nonlinearly stable boundary analysis of \cite{nordstrom2022nonlinear,nordstrom2023nonlinear}, we choose the evolution variables $\Uvec$ in \eqref{eq:generic_relation} to be the scaled and rotated primitive variables, $\Uprimvec$, defined as
\begin{equation}
    \label{eq:rotatedPrimVars}
    \Uvec
    =
   \frac{1}{\sqrt{2g}}
   \begin{pmatrix}
   g h\\[0.05cm]
   \sqrt{gh} v_n\\[0.05cm]
   \sqrt{gh} v_\tau\\[0.05cm]
   \end{pmatrix}
    =
   \frac{1}{\sqrt{2g}}
   \begin{pmatrix}
   gh\\[0.05cm]
   c v_n\\[0.05cm]
   c v_\tau\\[0.05cm]
   \end{pmatrix}
   =
   \frac{1}{\sqrt{2g}}
   \begin{pmatrix}
   g & 0 & 0 \\[0.05cm]
   0 & c & 0\\[0.05cm]
   0 & 0 & c \\[0.05cm]
   \end{pmatrix}
   \begin{pmatrix}
   1 & 0 & 0\\[0.05cm]
   0 & n_1 & n_2\\[0.05cm]
   0 & -n_2 & n_1\\[0.05cm]
   \end{pmatrix}      
   \begin{pmatrix}
   h\\[0.05cm]
   v_1\\[0.05cm]
   v_2\\[0.05cm]
   \end{pmatrix}
   =
   \SMat\,\NMat\Uprimvec,
\end{equation}
where $c = \sqrt{gh}$ is the wave speed, $v_\tau = -n_2 v_1 + n_1 v_2$ is the tangential velocity, $\SMat$ is a diagonal scaling matrix, and $\NMat$ is a normal rotation matrix.
Unlike for the Burgers analysis in Sec.~\ref{sec:burgers}, there is a mismatch between the entropy variables, $\Vvec$, and evolution variables, $\Uvec$.
To relate the entropy variables to the primitive variables for the shallow water equations we use the transformation
\begin{equation}
   \label{eq:ent_transform}
   \Vvec
   =
   \begin{pmatrix}
   g h - \frac{1}{2}(v_1^2 + v_2^2)\\[0.05cm]
   v_1\\[0.05cm]
   v_2\\[0.05cm]   
   \end{pmatrix}
   =
   \begin{pmatrix}
   g & -\frac{v_1}{2} & -\frac{v_{2}}{2}\\[0.05cm]
   0 & 1 & 0\\[0.05cm]
   0 &  0 & 1\\[0.05cm]
   \end{pmatrix}   
   \begin{pmatrix}
   h\\[0.05cm]
   v_1\\[0.05cm]
   v_2\\[0.05cm]
   \end{pmatrix}
   =
   \MMat\Uprimvec.
\end{equation}

From the generic relation in Thm. \ref{thm:FluxTheorem} we are equipped to describe the specific relation
required for the two-dimensional shallow water equations.
Substituting \eqref{eq:rotatedPrimVars} as well as the relationship between entropy variables and primitive variables \eqref{eq:ent_transform} we have
\begin{equation}
   \label{eq:swe_relation1}
   \Vvec^T\left(\statevec{F}^*_n - \statevec{F}_n\right) = (\MMat\Uprimvec)^T\left(\statevec{F}^*_n - \statevec{F}_n\right)
   = \Uprimvec^T\left(\MMat^T\left(\statevec{F}^*_n - \statevec{F}_n\right)\right)
\end{equation}
To obtain \eqref{eq:generic_relation} we require that \eqref{eq:swe_relation1} is equal to
\begin{equation}
   \label{eq:swe_relationIntermediate}
   \Uvec^T\left(2\TMat\,\IMinus\sqrtLambM\left(\sqrtLambM\Wminus - \Gvec\right)\right)
   =
   \Uprimvec^T\NMat^T\SMat\left(2\TMat\,\IMinus\sqrtLambM\left(\sqrtLambM\Wminus - \Gvec\right)\right),
\end{equation}
where we have inserted \eqref{eq:rotatedPrimVars}.
Now \eqref{eq:swe_relation1} and \eqref{eq:swe_relationIntermediate} are contracted with the primitive variables, $\Uprimvec$, and \eqref{eq:generic_relation} becomes
\begin{equation}
   \label{eq:swe_relation2}
   \left(\statevec{F}^*_n - \statevec{F}_n\right)= 2\invMMatT\NMat^T\SMat\,\TMat\,\IMinus\sqrtLambM\left(\sqrtLambM\Wminus - \Gvec\right).
\end{equation}
%

The remaining piece in the analysis is to find a transformation matrix $\TMat$.
This crucial transformation defines the (nonlinear) characteristic variables, $\Wvec$, and the diagonal matrix $\LambMat$.
The characteristic variables $\Wvec$ are determined by rewriting the entropy flux, a scalar quantity, in the quadratic form \eqref{eq:entFluxAlt}.
There is freedom in the choice of the boundary matrix $\AMat$ that one should exploit in this process.
Following \cite{nordstrom2022nonlinear}, there is a one-parameter family of skew-symmetric forms of the shallow water equations with the boundary matrix
\begin{equation}
    \label{eq:AMatrix1}
    \AMat
    =
    \begin{pmatrix}
        2\beta v_n & (1-\beta) c & 0\\[0.1cm]
        (1-\beta) c & v_n & 0\\[0.1cm]
        0 & 0 & v_n\\[0.1cm]
    \end{pmatrix},
    \quad \beta\in\mathbb{R},
\end{equation}
where $\beta$ is a free parameter.
Contracting the boundary matrix \eqref{eq:AMatrix1} from the left and right with $\Uvec$ \eqref{eq:rotatedPrimVars}, it is straightforward to find
\begin{equation}
    \begin{aligned}
    \Uvec^T\AMat\Uvec
    = 
    \frac{1}{2g}\left(c^2 v_n^3 + c^2 v_n v_\tau^2 + 2 c^4 v_n (1-\beta) + 2 c^4 v_n \beta \right) 
    &= 
    \frac{h v_n}{2g}\left( v_n^2 + v_\tau^2 \right) + g h^2 v_n\\[0.1cm]
    &=
    \frac{h v_n}{2g}\left( v_1^2 + v_2^2 \right) + g h^2 v_n 
    =
    F_n^\ent.
    \end{aligned}
\end{equation}
All terms involving $\beta$ cancel under contraction,
so there is no guidance at this point for how to select the parameter in the skew-symmetric formulation.

\subsection{Congruence transformation}\label{sec:congruence}
To determine the free parameter, $\beta$, we use the fact that congruent matrices have the same number of positive, negative and zero eigenvalues, by way of Sylvester's law of inertia \cite{horn2012}.
Matrix congruency preserves quadratic forms \cite{horn2012}, so that $\Wvec^T\LambMat\Wvec$ recovers the entropy flux in the normal direction $F_n^{\ent}$ independent of the choice of $\Wvec$.
Congruency also means that the signature of the matrix $\AMat$ and $\LambMat$ are identical under the transformation
\begin{equation}
    \Uvec^T\AMat\Uvec = \Uvec^T\TMat\,\LambMat\,\TMat^T\Uvec = \Wvec^T\LambMat\,\Wvec,\quad \text{where} \quad \Wvec = \TMat^T\Uvec.
\end{equation}
The number of positive and negative entries in the diagonal matrix $\LambMat$ determine how many boundary conditions that are required to guarantee that the solution is bounded by data~\cite{nordstrom2020}, as in Sec.~\ref{sec:generic}.

To derive the congruence transformation $\AMat = \TMat\,\LambMat\,\TMat^T$
we choose the target diagonal matrix $\LambMat$ to be
\begin{equation}
    \label{eq:switch-matrix}
    \LambMat = \text{diag}(v_n - c , v_n, v_n + c),
\end{equation}
which is the diagonal matrix of eigenvalues for the shallow water flux Jacobian \cite{LeVeque2002}.
We select this particular diagonal matrix so that the number of positive and negative entries change when the flow regime transitions from subcritical (fluvial) to supercritical (torrential) determined by the sign and magnitude of the normal Froude number $v_n / c$.
This choice also ensures that the placement and number of boundary conditions for the nonlinear problem is consistent with the linear analysis of the shallow water equations \cite{oliger1978,shallowwaterbook}.

We introduce an additional parameter $\alpha$ where
\begin{equation}
    \label{eq:alphaEqn}
    \alpha^2 = 2\beta \quad\text{and}\quad \alpha = 1 - \beta
\end{equation}
to match terms in \eqref{eq:AMatrix1}.
From these two conditions we find that
\begin{equation}
    \label{eq:alphas}
    \frac{\alpha^2}{2} + \alpha = 1
    \quad\text{or}\quad
    \alpha = -1 \pm \sqrt{3},
\end{equation}
and $\beta = 1 - \alpha = 2 \mp \sqrt{3}$.
Of the two choices in \eqref{eq:alphas} we select the positive value $\alpha = -1 + \sqrt{3}$, as the negative value yields a wider spectral radius for the matrix $\AMat$.
A further physical motivation for this choice will be given in Sec.~\ref{sec:subcrit-outflow}.
The goal is then to find a transformation matrix $\TMat$ so that
\begin{equation}
    \label{eq:congruence}
    \TMat\,\LambMat\,\TMat^T
    =
    \TMat
    \begin{pmatrix}
        v_n - c& 0 & 0\\[0.1cm]
        0 & v_n & 0\\[0.1cm]
        0 & 0 & v_n + c\\[0.1cm]
    \end{pmatrix}    
    \TMat^T
    =
    \begin{pmatrix}
        \alpha^2 v_n & \alpha c & 0\\[0.1cm]
        \alpha c & v_n & 0\\[0.1cm]
        0 & 0 & v_n\\[0.1cm]
    \end{pmatrix}
    =
    \AMat.
\end{equation}
After many algebraic manipulations found in \ref{app:congrunet}, we derive the desired transformation matrix
\begin{equation}
    \label{eq:TMat}
    \TMat
    =
    \begin{pmatrix}
        \frac{\alpha}{\sqrt{2}} & 0 & \frac{\alpha}{\sqrt{2}}\\[0.1cm]
        -\frac{1}{\sqrt{2}} & 0 & \frac{1}{\sqrt{2}}\\[0.1cm]
        0 & 1 & 0
    \end{pmatrix}.
\end{equation}
%
\begin{remark}
    The matrices $\TMat$ \eqref{eq:TMat} and $\LambMat$ \eqref{eq:switch-matrix} \textbf{are not} the eigenvectors and eigenvalues, respectively, of the matrix $\AMat$, as would be the case for a similarity transformation. 
    Instead, the invertible matrix $\TMat$ in the congruence relation $\AMat = \TMat\,\LambMat\,\TMat^T$ is created to \textit{simultaneously} preserve the quadratic form that relates the boundary matrix $\AMat$ and the entropy flux $F_n^\ent$ as well as the signature of the diagonal eigenvalue matrix of the shallow water flux Jacobian \eqref{eq:switch-matrix}.
\end{remark}
\noindent From the transformation matrix $\TMat$, we define the set of (nonlinear) characteristic variables
\begin{equation}
    \label{eq:swe-charvars}
    \Wvec = \TMat^T \Uvec
    =
    \frac{c}{2\sqrt{g}}
    \begin{pmatrix}
    \alpha c - v_n\\[0.1cm]
    \sqrt{2} v_\tau\\[0.1cm]
    \alpha c + v_n\\[0.1cm]    
    \end{pmatrix}.
\end{equation}

\subsection{Nonlinear numerical boundary flux functions for the shallow water equations}\label{sec:swe_BCS}
All the necessary parts in the relation \eqref{eq:swe_relation2} are now established and
we are ready to derive numerical boundary flux functions $\statevec{F}^*_n$, which will change for different flow regimes. 
The signs in the diagonal matrix \eqref{eq:switch-matrix} progress through four states depending on the direction and magnitude of the normal velocity:
\begin{itemize}
    \item Supercritical outflow; $v_n > 0$ and $|v_n| > c$ has zero negative values so $\statevec F^*$ uses no boundary data
    \item Subcritical outflow; $v_n > 0$ and $|v_n| < c$ has one negative value so $\statevec F^*$ uses one boundary data value
    \item Subcritical inflow; $v_n < 0$ and $|v_n| < c$ has two negative values so $\statevec F^*$ uses two boundary data values
    \item Supercritical inflow; $v_n < 0$ and $|v_n| > c$ has three negative values so $\statevec F^*$ uses three boundary data values
\end{itemize}

\revOne{Again, the numerical boundary fluxes $\statevec{F}^*_n$ are functions of the internal and external solution states and will not explicitly contain the boundary term $\Gvec$, see Remark~\ref{rem:Grole}.}

\subsubsection{Supercritical outflow numerical boundary flux}\label{sec:supercrit-outflow}

For shallow water supercritical outflow, $v_n > 0$ and $v_n > c$ where $c = \sqrt{gh}$.
From the diagonal matrix \eqref{eq:switch-matrix}
there are no negative terms so that
\begin{equation}
\absLambM = \text{diag}(0,\,0,\,0),
\end{equation}
and the indicator matrix is $\IMinus = \text{diag}(0,\,0,\,0)$.
This means that there are no incoming characteristics at a supercritical outflow boundary. 
Thus, no penalty terms are required to obtain a bound on the nonlinear solution in terms of data.
It is a boundary where all the solution information comes from inside of the domain \cite{toro2013riemann,bristeau2001boundary}.
To recover this action in the split form DGSEM we choose
\begin{equation}
    \label{eq:supercrit-outflow}
    \statevec{F}^*_n(\Uvec, \Uext)
    =
    \statevec{F}_n(\Uvec),
\end{equation}
so that the penalty term at the boundary in \eqref{eq:ent_dgsem} cancels.
In practice, any \textit{consistent} numerical flux function can be used to create \eqref{eq:supercrit-outflow} by choosing $\Uext = \Uvec$ since consistency means that
\begin{equation}
    \statevec{F}^*_n(\Uvec, \Uext) = \statevec{F}^*_n(\Uvec, \Uvec) = \statevec{F}_n(\Uvec).
\end{equation}

\subsubsection{Subcritical outflow numerical boundary flux}\label{sec:subcrit-outflow}

For shallow water subcritical outflow, $v_n > 0$ and $v_n < c$.
From the diagonal matrix \eqref{eq:switch-matrix}
there is one negative term so that
\begin{equation}
\absLambM = \text{diag}(|v_n - c|,\,0,\,0),
\end{equation}
and $\IMinus = \text{diag}(1,\,0,\,0)$.
The incoming characteristic variable at a subcritical outflow boundary is therefore
\begin{equation}
    \Wminus = \frac{c}{2\sqrt{g}}\begin{pmatrix}
        \alpha c - v_n\\[0.1cm]
        0\\[0.1cm]
        0
    \end{pmatrix}.
\end{equation}
Analogous to the Burgers analysis, we weakly impose the data for the boundary vector
\begin{equation}
    \label{eq:GvecAnsatz}
    \Gvec = \sqrt{\absLambMExt}\WminusExt\quad\text{with}\quad \WminusExt = \IMinus\TMat\Uext,
\end{equation}
in terms of the external rotated and scaled primitive variables \eqref{eq:rotatedPrimVars}
\begin{equation}
    \label{eq:primVarsAnsatz}
    \Uext
    =
   \frac{1}{\sqrt{2g}}
   \begin{pmatrix}
   g & 0 & 0 \\[0.05cm]
   0 & \cext & 0\\[0.05cm]
   0 & 0 & \cext \\[0.05cm]
   \end{pmatrix}
   \begin{pmatrix}
   \hext\\[0.05cm]
   \vnext\\[0.05cm]
   \vtext\\[0.05cm]
   \end{pmatrix},
\end{equation}
and
\begin{equation}
    \absLambMExt = \text{diag}(|\vnext - \cext|,\, 0,\, 0).
\end{equation}

We find in \ref{app:subcrit-outflow} that the numerical boundary flux function for subcritical outflow that satisfies \eqref{eq:swe_relation2} is
\begin{equation}
    \label{eq:subcrit-outflow}
    \statevec{F}^*_n(\Uvec, \Uext)
      =
      \begin{pmatrix}
      \frac{\alpha}{2} h v_n + (1-\alpha)h c + \frac{\alpha}{2 g}c v_n^2 - \frac{\alpha}{2g} \geo{\lambda_1} \cext (\alpha \cext - \vnext)\\\hdashline
      \left(\frac{\alpha}{4} + \frac{1}{2}\right) h v_1 v_n + \frac{1 - \alpha}{2} h c v_1 + \frac{\alpha}{4g} c v_1 v_n^2
         + (1-\alpha)\frac{g h^2}{2} n_1 + \frac{h v_n}{2} ((1+\alpha)c - v_n) n_1\\[0.1cm] 
         - \frac{1}{4g} \geo{\lambda_1} \cext (\alpha \cext - \vnext)(\alpha v_1 - 2 c n_1)\\\hdashline
      \left(\frac{\alpha}{4} + \frac{1}{2}\right) h v_2 v_n + \frac{1 - \alpha}{2} h c v_2 + \frac{\alpha}{4g} c v_2 v_n^2
         + (1-\alpha)\frac{g h^2}{2} n_2 + \frac{h v_n}{2} ((1+\alpha)c - v_n) n_2\\[0.1cm] 
         - \frac{1}{4g} \geo{\lambda_1} \cext (\alpha \cext - \vnext)(\alpha v_2 - 2 c n_2)
      \end{pmatrix}
\end{equation}
with $\alpha = -1 + \sqrt{3}$. The auxiliary variables and geometric means are
\begin{equation}
    \begin{aligned}
        c = \sqrt{gh}, \quad 
        \cext = \sqrt{g\hext}, \quad
        \vnext = n_1 v_1^\ext + n_2 v_2^\ext,
        \\ 
        \geo{h} = \sqrt{h\hext}, \quad
        \geo{\lambda_1} = \sqrt{(c - v_n) (\cext -\vnext)}.
    \end{aligned}
\end{equation}
The numerical boundary flux \eqref{eq:subcrit-outflow} is consistent, as can be shown by taking $\Uext = \Uvec$, recalling that $\alpha^2 + 2 \alpha - 2 = 0$.

\begin{remark}
    Interestingly, the last term in each of the momentum flux components in \eqref{eq:subcrit-outflow} resembles a Riemann invariant for the shallow water equations \cite{LeVeque2002}; however the velocity is scaled by $\alpha$.
    To not switch the sign of the Riemann invariant type term, we select the positive value $\alpha$ from \eqref{eq:alphas}.
\end{remark}

\subsubsection{Subcritical inflow numerical boundary flux}\label{sec:subcrit-inflow}

At boundaries of subcritical inflow, $v_n < 0$ and $|v_n| < c$.
From the diagonal matrix \eqref{eq:switch-matrix} there are two negative entries so that
\begin{equation}
\absLambM = \text{diag}(|v_n - c|,\,|v_n|,\,0),
\end{equation}
and $\IMinus = \text{diag}(1,\,1,\,0)$.
The incoming characteristic variables at a subcritical inflow boundary become
\begin{equation}
    \Wminus = \frac{c}{2\sqrt{g}}\begin{pmatrix}
        \alpha c - v_n\\\sqrt{2}v_\tau\\0
    \end{pmatrix}.
\end{equation}
We weakly impose the data with the boundary vector ansatz \eqref{eq:GvecAnsatz}, external evolution variables \eqref{eq:primVarsAnsatz}, and
\begin{equation}
    \absLambMExt = \text{diag}(|\vnext - \cext|,\, |\vnext|,\, 0)
\end{equation}
and find in \ref{app:subcrit-inflow} that the numerical boundary flux function for subcritical outflow that satisfies \eqref{eq:swe_relation2} is
\begin{equation}
    \label{eq:subcrit-inflow}
      \statevec{F}^*_n(\Uvec, \Uext)
      =
      \begin{pmatrix}
      \frac{\alpha}{2} h v_n + (1-\alpha)h c + \frac{\alpha}{2 g} c v_n^2 - \frac{\alpha}{2g} \geo{\lambda_1} \cext (\alpha \cext - \vnext)\\\hdashline
      \left(\frac{\alpha}{4} - \frac{1}{2}\right) h v_1 v_n + \frac{1 - \alpha}{2} h c v_1 + \frac{\alpha}{4g} c v_1 v_n^2
         + (1-\alpha)\frac{g h^2}{2} n_1 + \frac{h v_n}{2} ((1+\alpha)c + v_n) n_1\\[0.1cm] 
         - \frac{1}{4g} \geo{\lambda_1} \cext (\alpha \cext - \vnext)(\alpha v_1 - 2 c n_1) + \geo{\lambda_2} \geo{h} \vtext n_2\\\hdashline
      \left(\frac{\alpha}{4} - \frac{1}{2}\right) h v_2 v_n + \frac{1 - \alpha}{2} h c v_2 + \frac{\alpha}{4g} c v_2 v_n^2
         + (1-\alpha)\frac{g h^2}{2} n_2 + \frac{h v_n}{2} ((1+\alpha)c + v_n) n_2\\[0.1cm] 
         - \frac{1}{4g} \geo{\lambda_1} \cext (\alpha \cext - \vnext)(\alpha v_2 - 2 c n_2) - \geo{\lambda_2} \geo{h} \vtext n_1
    \end{pmatrix},
\end{equation}
with $\alpha = -1 + \sqrt{3}$. The auxiliary variables and geometric means are now
\begin{equation}
    \begin{aligned}
        c = \sqrt{gh}, \quad
        \cext = \sqrt{g\hext}, \quad
        \vnext = n_1 v_1^\ext + n_2 v_2^\ext, \quad
        \vtext = -n_2 v_1^\ext + n_1 v_2^\ext,\\
        \geo{h} = \sqrt{h\hext}, \quad
        \geo{\lambda_1} = \sqrt{(|v_n| + c) (|\vnext| + \cext)}, \quad 
        \geo{\lambda_2} = \sqrt{|v_n| |\vnext|}.
    \end{aligned}  
\end{equation}
As in Sec.~\ref{sec:subcrit-outflow}, we select the positive value of $\alpha$.
The numerical boundary flux \eqref{eq:subcrit-inflow} is also consistent, as can be shown by taking $\Uext = \Uvec$, recalling that $\alpha^2 + 2 \alpha - 2 = 0$. 

\subsubsection{Supercritical inflow numerical boundary flux}\label{sec:supercrit-inflow}

Supercritical inflow boundaries are characterized by $v_n < 0$ and $|v_n| < c$.
From the diagonal matrix \eqref{eq:switch-matrix} there are three negative entries so
\begin{equation}
\absLambM = \text{diag}(|v_n - c|,\,|v_n|,\,|v_n + c|),
\end{equation}
and $\IMinus = \text{diag}(1,\,1,\,1)$.
The incoming characteristic variables at a supercritical inflow boundary are
\begin{equation}
    \Wminus = \frac{c}{2\sqrt{g}}\begin{pmatrix}
        \alpha c - v_n\\\sqrt{2}v_\tau\\ \alpha c + v_n
    \end{pmatrix}.
\end{equation}
We weakly impose the data with the boundary vector ansatz \eqref{eq:GvecAnsatz}, external evolution variables \eqref{eq:primVarsAnsatz}, and
\begin{equation}
    \absLambMExt = \text{diag}(|\vnext - \cext|,\, |\vnext|,\, |\vnext + \cext|)
\end{equation}
to find in \ref{app:supercrit-inflow} that the numerical boundary flux function for subcritical outflow that satisfies \eqref{eq:swe_relation2} is
\begin{equation}
    \label{eq:supercrit-inflow}
      \statevec{F}^*_n(\Uvec, \Uext)
      =
      \begin{pmatrix}
      (\alpha - 1) h v_n - \frac{\alpha}{2g} \geo{\lambda_1} \cext (\alpha \cext - \vnext) - \frac{\alpha}{2g} \geo{\lambda_3} \cext (\alpha \cext + \vnext)\\\hdashline
      \left(\frac{\alpha}{2} - 1\right) h v_1 v_n
         + (1 - 2\alpha)\frac{g h^2}{2} n_1\\[0.1cm] 
         - \frac{1}{4g} \geo{\lambda_1} \cext (\alpha \cext - \vnext)(\alpha v_1 - 2 c n_1)
         - \frac{1}{4g} \geo{\lambda_3} \cext (\alpha \cext + \vnext)(\alpha v_1 + 2 c n_1)\\[0.1cm]
         + \geo{\lambda_2} \geo{h} \vtext n_2
\\\hdashline
      \left(\frac{\alpha}{2} - 1\right) h v_2 v_n
         + (1 - 2\alpha)\frac{g h^2}{2} n_2\\[0.1cm] 
         - \frac{1}{4g} \geo{\lambda_1} \cext (\alpha \cext - \vnext)(\alpha v_2 - 2 c n_2)
         - \frac{1}{4g} \geo{\lambda_3} \cext (\alpha \cext + \vnext)(\alpha v_2 + 2 c n_2)\\[0.1cm]
         - \geo{\lambda_2} \geo{h} \vtext n_1
    \end{pmatrix}
\end{equation}
      with $\alpha = -1 + \sqrt{3}$ as well as auxiliary variables and geometric means
\begin{equation}
    \begin{aligned} 
        c = \sqrt{gh}, \quad
        \cext = \sqrt{g\hext}, \quad
        \vnext = n_1 v_1^\ext + n_2 v_2^\ext, \quad
        \vtext = -n_2 v_1^\ext + n_1 v_2^\ext, \quad
        \geo{h} = \sqrt{h\hext},\\
        \geo{\lambda_1} = \sqrt{(|v_n| + c) (|\vnext| + \cext)}, \quad 
        \geo{\lambda_2} = \sqrt{|v_n| |\vnext|}, \quad
        \geo{\lambda_3} = \sqrt{(|v_n| - c) (|\vnext| - \cext)}.
    \end{aligned}
\end{equation}
As in Section~\ref{sec:subcrit-outflow} we select the positive value of $\alpha$ from \eqref{eq:alphas}.
The numerical boundary flux \eqref{eq:supercrit-inflow} is also consistent. 

\section{Numerical results}\label{sec:numerical}

We apply the newly derived boundary fluxes to compute solutions of the Burgers and shallow water equations with DGSEM approximations.
The spatial discretizations for the split form DGSEM are available in Trixi.jl
\cite{ranocha2022adaptive,schlottkelakemper2021purely} and TrixiShallowWater.jl \cite{winters2025trixi}.
For time integration we use CFL-based time stepping with the five-stage, four-order explicit Runge-Kutta method of Carpenter and Kennedy \cite{Carpenter&Kennedy:1994} implemented in
OrdinaryDiffEq.jl \cite{rackauckas2017differentialequations}.
The unstructured curvilinear quadrilateral meshes were constructed
with HOHQMesh \cite{kopriva2024hohqmesh:joss,kopriva2024hohqmeshjl}.
We use Plots.jl \cite{christ2023plots} and ParaView \cite{ahrens2005paraview} to visualize the results.
The implementations of the nonlinearly stable boundary fluxes derived here as well as all source code needed to reproduce the numerical experiments is
available online in our reproducibility repository~\cite{winters2025numericalRepro}.

\subsection{Burgers' equation}\label{sec:num_burgers}

The first, and simplest, test problem approximates the solution of the Burgers equation \eqref{eq:BasicBurgers} on the domain $[-1,1]$.
We manufacture a solution
\begin{equation}
    \label{eq:burgersMMS}
    u(x,t) = 2 + \sin(\pi(x - t) - 0.7),
\end{equation}
so that it remains smooth over time, and introduces a source term
\begin{equation}
    \label{eq:burgersSource}
    s(x,t) = \pi \cos(\pi(x - t) - 0.7) (1 + \sin(\pi(x - t) - 0.7)).
\end{equation}
The solution \eqref{eq:burgersMMS} is always positive in the domain $[-1,1]$.
Thus, a boundary condition is needed at the left.

We take the final time to be $t = 120$, $\text{CFL}=0.75$, and use one of three boundary fluxes to impose the boundary condition: (1) The entropy conservative (EC) flux \cite{gassner2013skew}, (2) The LLF flux \cite{toro2013riemann}, and (3) the new boundary flux from \eqref{eq:num_burgers}.
The computation at all interior element interfaces uses the entropy conservative \revOne{(EC)} flux so the approximation is dissipation-free in the interior, and the only dissipation is introduced at the left and right physical boundaries.

For each test we use a five element mesh with polynomials of degree $N=7$ on each element.
The EC flux is not an upwind Riemann solver and does not consider the characteristics in its design.
This, unsurprisingly, makes it an especially poor choice for open boundary conditions as there is no bound, cf. Rem.~\ref{rem:ECBCFlux}.
We see in Fig.~\ref{fig:burgersEC}(a) that the solution computed with the EC flux at the open boundaries exhibits instability.
Imposing the inflow-outflow boundary conditions with the LLF flux does run successfully to the final time for this problem, see Fig.~\ref{fig:burgersEC}(b) even though the inflow condition is not provably bounded, cf. Rem.~\ref{rem:ECBCFlux}. (There is outflow dissipation, however, to mitigate that fact.)
The boundary flux \eqref{eq:num_burgers} produces Fig.~\ref{fig:burgersEC}(c), which looks identical to the LLF result.
At the final time, the $L_2$ error with the new flux result is $8.80419344\cdot 10^{-7}$ and the LLF result has an $L_2$ error of $8.80425611\cdot 10^{-7}$, which are the same to four significant digits.
\begin{figure}[!ht]
   \centering
    \subfloat[EC flux]
    {
        \includegraphics[width=0.31\textwidth]{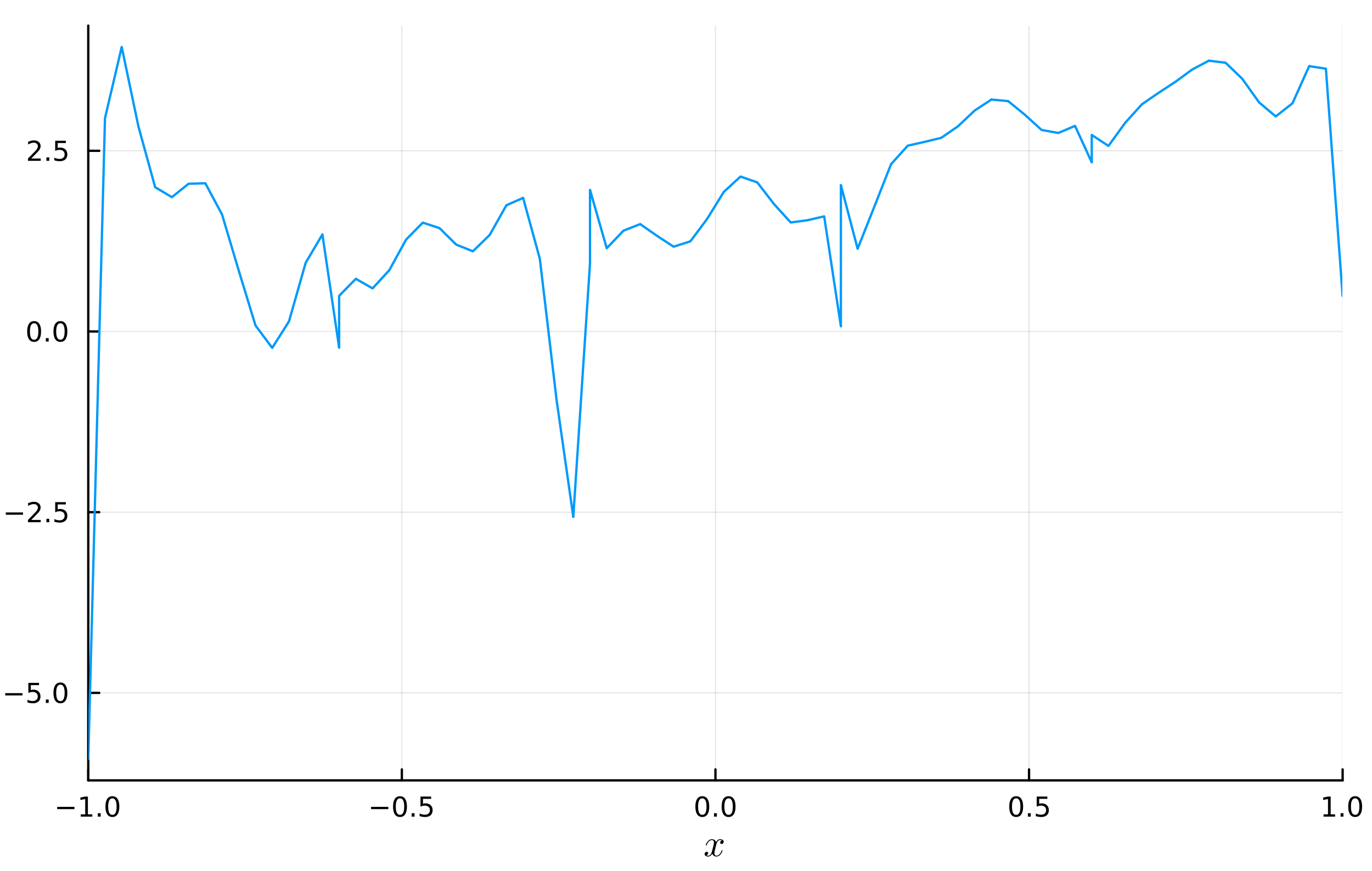}
    }
\subfloat[LLF flux]
    {
        \includegraphics[width=0.31\textwidth]{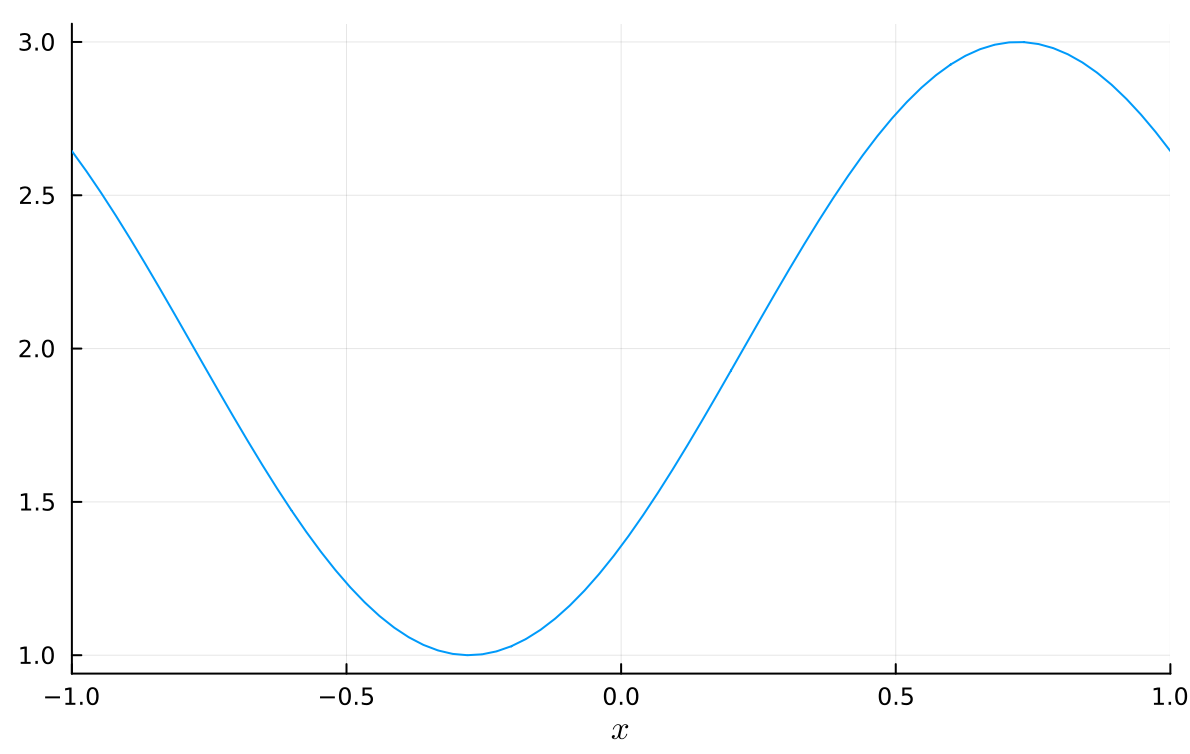}
    }
\subfloat[Nonlinearly stable flux \eqref{eq:num_burgers}]
    {
        \includegraphics[width=0.31\textwidth]{burgers_stable.png}
    }
    \caption{The solution \eqref{eq:burgersMMS} of Burgers' equation at final time $t=120$ with polynomials of degree seven on a five element mesh. \revOne{All element interfaces use the EC flux; so the approximation is dissipation-free in the interior. Thus, the only dissipation in the approximation is introduced at the left and right physical boundaries by the selected boundary fluxes.}}
    \label{fig:burgersEC}
\end{figure}

\subsection{Shallow water equations}

The next two examples approximate the shallow water equations in two space dimensions.
For the first example, we create a manufactured solution to compute subcritical and supercritical flows in a channel.
In the second example, we solve a geostrophic adjustment test problem where the shallow water solution evolves to a rotating equilibrium under Coriolis forces~\cite{castro2008finite,kuo2000nonlinear}.

\subsubsection{Manufactured solution in a channel}\label{sec:swe_channel}

For the first example, we compute the shallow water flow with a constant background velocity in a slanted channel through which a Gaussian pulse in the water height enters, propagates through the channel, and leaves.
The problem has inflow at the bottom portion of the channel and outflow at the top.
The edges of the channel are non-penetrating slip walls, which are neutrally stable, with glancing boundary conditions \cite{nordstrom2022linear}.
Fig.~\ref{fig:subcrit_2d}(a) shows the slanted channel domain together with an unstructured quadrilateral mesh.
We curve the inflow and outflow boundaries of the channel to fully exercise the terms in the new boundary fluxes.

The manufactured solution for the shallow water equations written in primitive variables is
\begin{equation}
    \label{eq:mms_swe}
    \begin{pmatrix}
        h\\[0.1cm]v_1\\[0.1cm]v_2
    \end{pmatrix}
        =
        \begin{pmatrix}
            \frac{1}{2g}\left(h_0 + \text{exp}\left(-8\left(\left(x - \frac{t}{\sqrt{2}} + 2 \sqrt{2}\right)^2 + \left(y - \frac{t}{\sqrt{2}} + \frac{1}{\sqrt{2}})^2\right)\right)\right)\right)\\[0.1cm]
            \frac{1}{\sqrt{2}}\\[0.1cm]
            \frac{1}{\sqrt{2}}
        \end{pmatrix},
\end{equation}
which introduces a corresponding source term $\statevec{s}(x,t) = (0\,,\,g h h_x \,,\, g h h_y)^T$.
The real constant $h_0$ is left free to adjust the manufactured solution to be either in the subcritical or supercritical flow regime.

For the tests, we use polynomials of degree $N=5$ in each spatial direction on each element.
The mesh, shown in Fig.~\ref{fig:subcrit_2d}(a), contains 140 quadrilateral elements.
We take the gravitational constant $g=9.81$ and $\text{CFL}=0.9$.
The final simulation time is $t_{\text{end}}=11$, during which the Gaussian pulse enters, propagates through, and exits the domain.

For subcritical flows, we take $h_0=32$ in \eqref{eq:mms_swe}, which corresponds to a normal Froude number $|v_n|/\sqrt{gh} \approx 0.25$.
In this way, the subcritical inflow-outflow fluxes from \eqref{eq:subcrit-inflow} and \eqref{eq:subcrit-outflow} can be exercised.
We use the EC flux \cite{wintermeyer2017entropy} at element interfaces, so the only dissipation introduced into the approximation is due to the boundary fluxes.
With the new fluxes, the configuration runs the manufactured solution simulation successfully to the final time.
The $L_2$ errors of the conserved variables for the new fluxes used at the inflow-outflow boundaries with the dissipation-free EC flux at interior interfaces are $3.13 \cdot 10^{-5}$ for $h$, $1.12 \cdot 10^{-3}$ for $hv_1$ and $1.10 \cdot 10^{-3}$ for $hv_2$. We show the numerical solution of the subcritical configuration at time $t=6$ in Fig.~\ref{fig:subcrit_2d}(b).

Next, we set $h_0=\frac{3}{5}$ in \eqref{eq:mms_swe} to test a supercritical flow with a normal Froude number $|v_n|/\sqrt{gh} \approx 1.8$,
so that the supercritical inflow-outflow fluxes from \eqref{eq:supercrit-inflow} and \eqref{eq:supercrit-outflow} can be exercised.
Elsewhere, we set the internal element interfaces to use the EC flux \cite{wintermeyer2017entropy} so that the only dissipation is introduced by the open boundary treatments.
The computations successfully run through to the final time of $t_{end} = 11$.
We show the numerical solution for the supercritical configuration at time $t=6$ in Fig.~\ref{fig:subcrit_2d}(c).
As before, we present the $L_2$ errors of the conserved variables with the dissipation-free EC flux at interior interfaces, $9.71 \cdot 10^{-5}$ for $h$, $8.16 \cdot 10^{-5}$ for $hv_1$ and $7.74 \cdot 10^{-5}$ for $hv_2$.
\begin{figure}[!ht]
    \centering
    \subfloat[Domain and mesh]
    {
       \includegraphics[width=0.3175\textwidth]{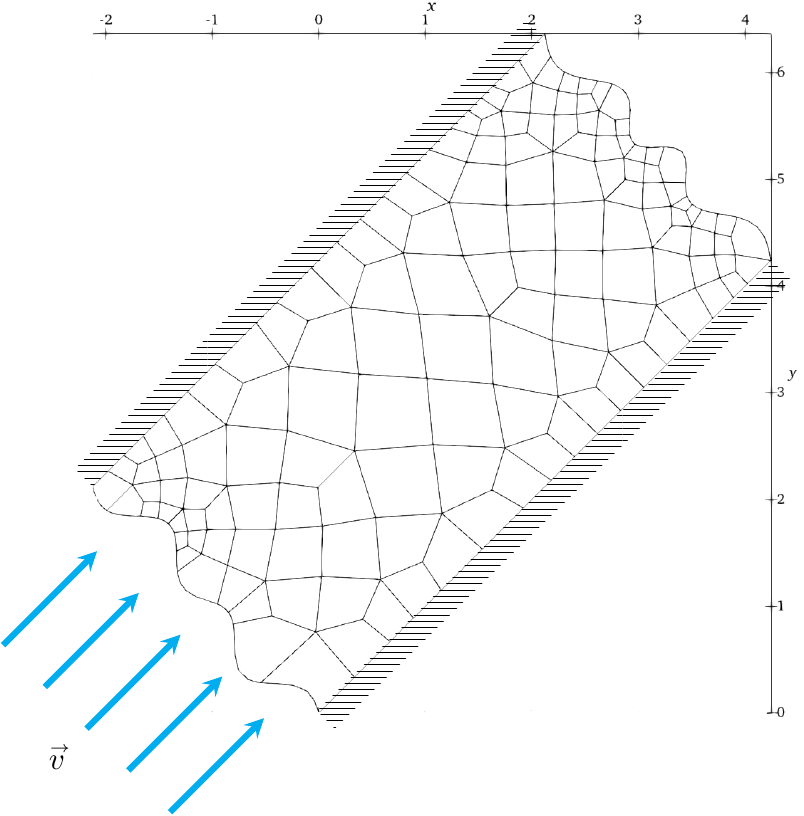}
    }
    \subfloat[Subcritical fluxes \eqref{eq:subcrit-outflow}, \eqref{eq:subcrit-inflow}]
    {
        \includegraphics[width=0.3175\textwidth, trim={60 10 10 35}, clip]{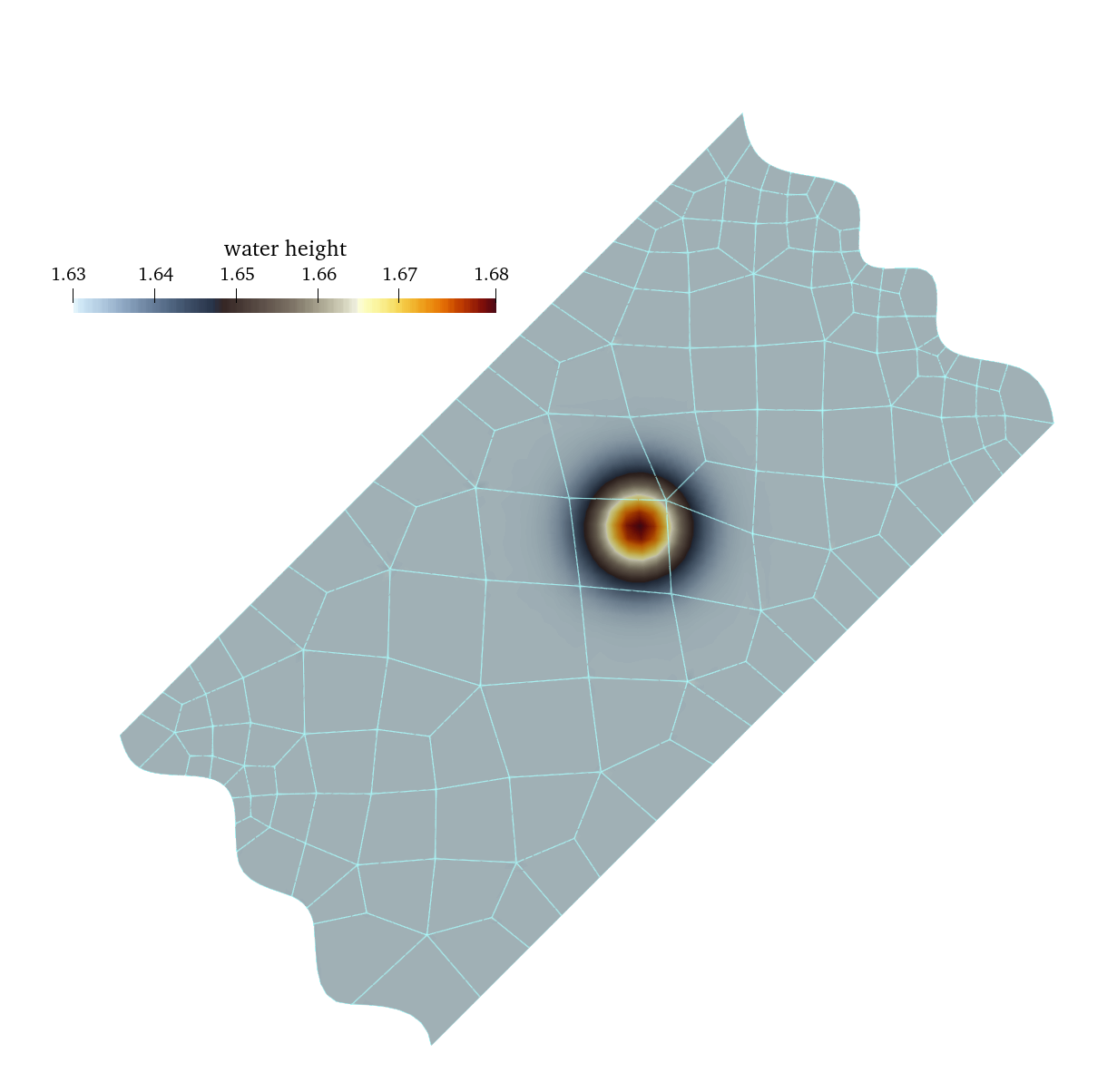}
    }
    \subfloat[Supercritical fluxes \eqref{eq:supercrit-outflow}, \eqref{eq:supercrit-inflow}]
    {
        \includegraphics[width=0.3175\textwidth, trim={50 0 0 35}, clip]{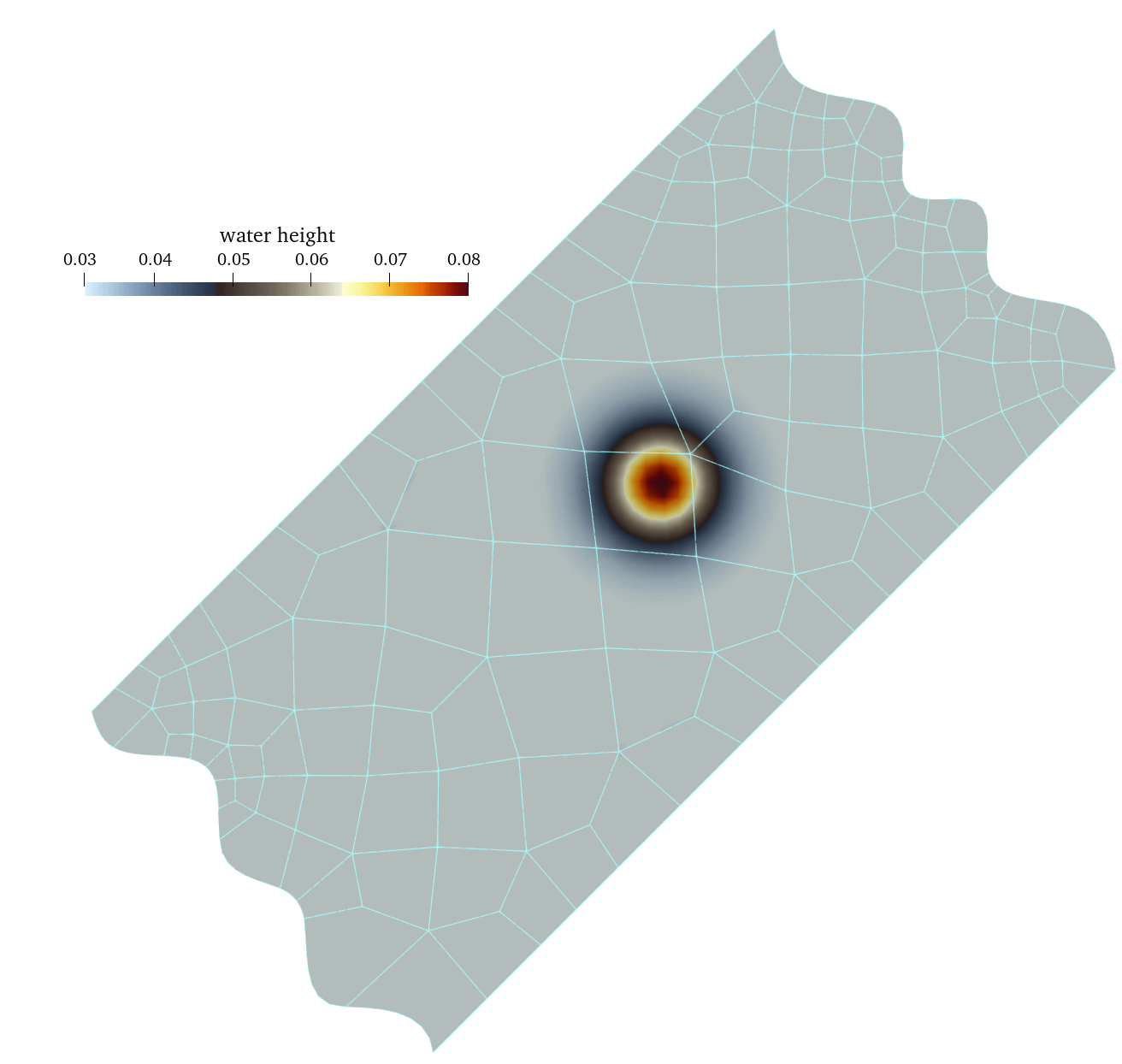}
    }
    \caption{(a): Domain for the solution \eqref{eq:mms_swe} with a constant velocity through a channel, curved inflow-outflow boundaries and slip walls.
    (b): Subcritical solution \eqref{eq:mms_swe} with $h_0 = 32$ at $t=6$. 
    (c): Supercritical solution \eqref{eq:mms_swe} with $h_0 = \frac{3}{5}$ at $t = 6$.
    The mesh contains 140 quadrilateral elements with polynomials of degree $N=5$ in each direction.
    All internal interfaces use the dissipation-free EC flux.}
    \label{fig:subcrit_2d}
\end{figure}

%

\subsubsection{Geostrophic adjustment}\label{sec:swe_geo}

The final problem tests the capability to model geostrophic adjustment, first proposed 
in 
\cite{kuo2000nonlinear}.
The problem is designed for the rotating shallow water equations to examine how initially unbalanced states dynamically evolve to a balanced moving equilibrium.
To pose this problem, we add a Coriolis source term to the shallow water equations \eqref{eq:swe_cons}
\begin{equation}
    \label{eq:corilis}
    \statevec{s}_{\text{Cor}}(\statevec{q})
    =
    \begin{pmatrix}
        0 \\ f h v_2 \\ -f h v_1
    \end{pmatrix},
\end{equation}
where $f = f_0 + \beta y$ is the $\beta$-plane approximation, see \cite{gill1982}.
The Coriolis source term \eqref{eq:corilis} is naturally skew-symmetric \cite{nordstrom2022linear} and does not influence the previously discussed energy/entropy bounds.

The initial conditions are
\begin{equation}
    \label{eq:geo-adjust}
    \begin{pmatrix}
        h\\v_1\\v_2
    \end{pmatrix}
    =
    \begin{pmatrix}
        1 + \frac{A_0}{2}\left(1 - \tanh\left(\frac{\sqrt{(\sqrt{\lambda}x)^2 + (y/\sqrt{\lambda})^2} - R_i}{R_E}\right)\right)\\[0.1cm]
        0\\[0.1cm]
        0
    \end{pmatrix},
\end{equation}
on the domain $\Omega = [-10,10]^2$ and with parameters $A_0=0.5$, $\lambda=2.5$, $R_E = 0.1$, and $R_i = 1$, where the same setup is considered in~\cite{castro2008finite,navas20182d,zhang2024well}.
The gravity and Coriolis parameters are taken to be $g = 1$ and $f = 1$, respectively.
This configuration is subcritical with a normal Froude number of approximately $0.14$ in the region near the equilibrium solution.
Thus, the edges of the square domain are set to be open subcritical outflow boundaries.

The initial water height disturbance from \eqref{eq:geo-adjust} is an unbalanced state that evolves non-axisymmetrically.
The water column falls and strong rotation due to the Coriolis forces generates gravitational waves that propagate outward from the center of the domain.
These gravitational waves should simply leave the domain through the open boundaries, resulting in an elliptical water height shape that slowly rotates clockwise under the Coriolis forces in dynamic equilibrium \cite{kuo2000nonlinear}.
It is the two-dimensional geostrophic adjustment, where initially unbalanced flows transition toward an equilibrium geostrophic balance, that the numerical approximation should reproduce.

For the simulation, we divide the domain $\Omega$ into a $32 \times 32$ Cartesian mesh.
We approximate the solution in each element with polynomials of degree $N = 8$ in each direction.
The final time for the simulation is $t_{end} = 100$ and we take the $\text{CFL}=0.5$.

In Fig.~\ref{fig:geo-novel} we present the solution at six times for the new outflow boundary flux, \eqref{eq:subcrit-outflow} where the dissipation-free EC flux is used at interior interfaces.
So for this case, the interior approximation is nearly dissipation-free and only the boundary flux \eqref{eq:subcrit-outflow} introduces dissipation into the approximation.
\begin{figure}[!ht]
   \centering
    \subfloat[$t=0$]
    {
        \includegraphics[width=0.3\textwidth, trim={115 30 130 30}, clip]{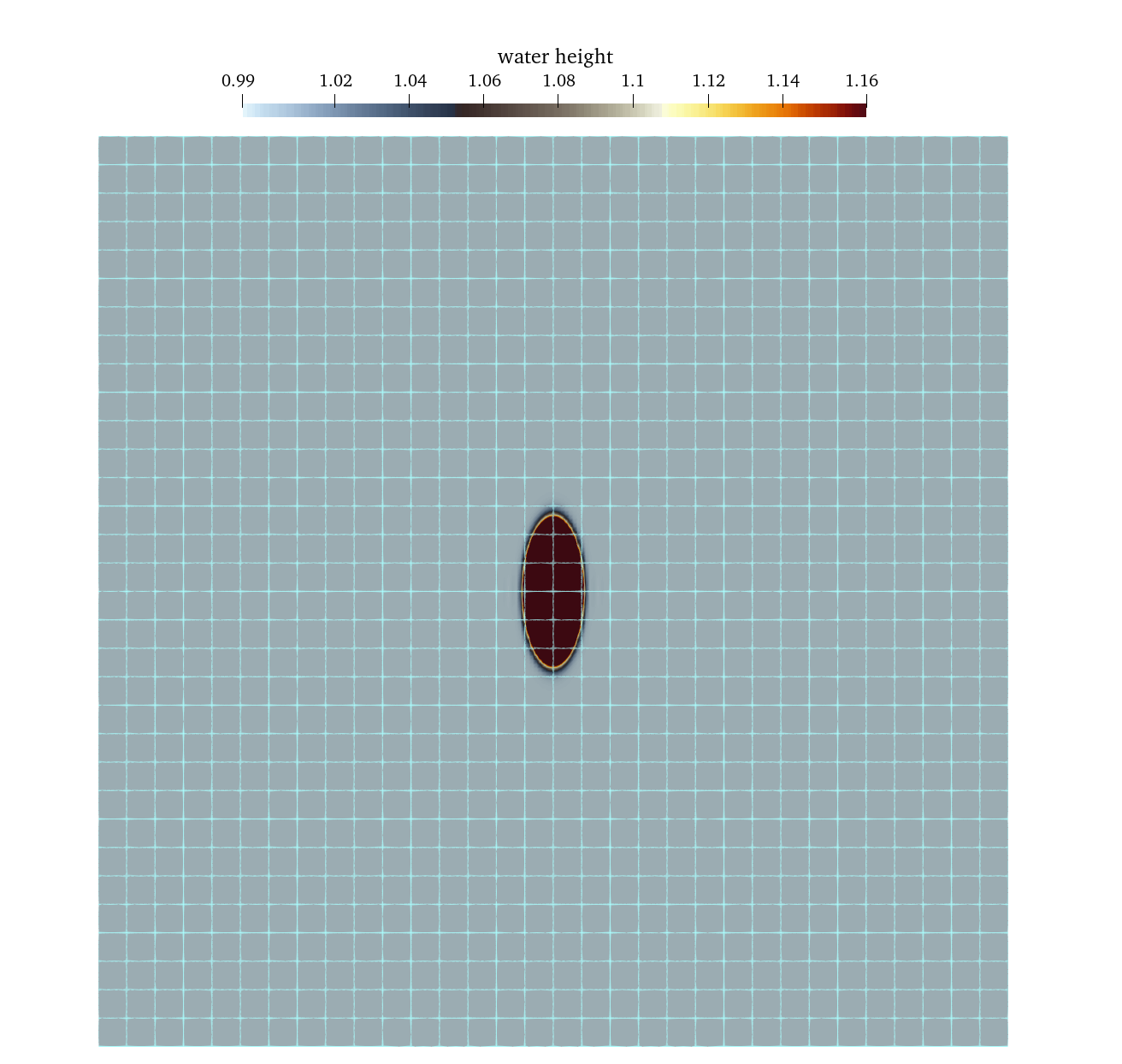}
    }
    \hspace*{0.2cm}
    \subfloat[$t=6$]
    {
        \includegraphics[width=0.3\textwidth, trim={115 30 110 0}, clip]{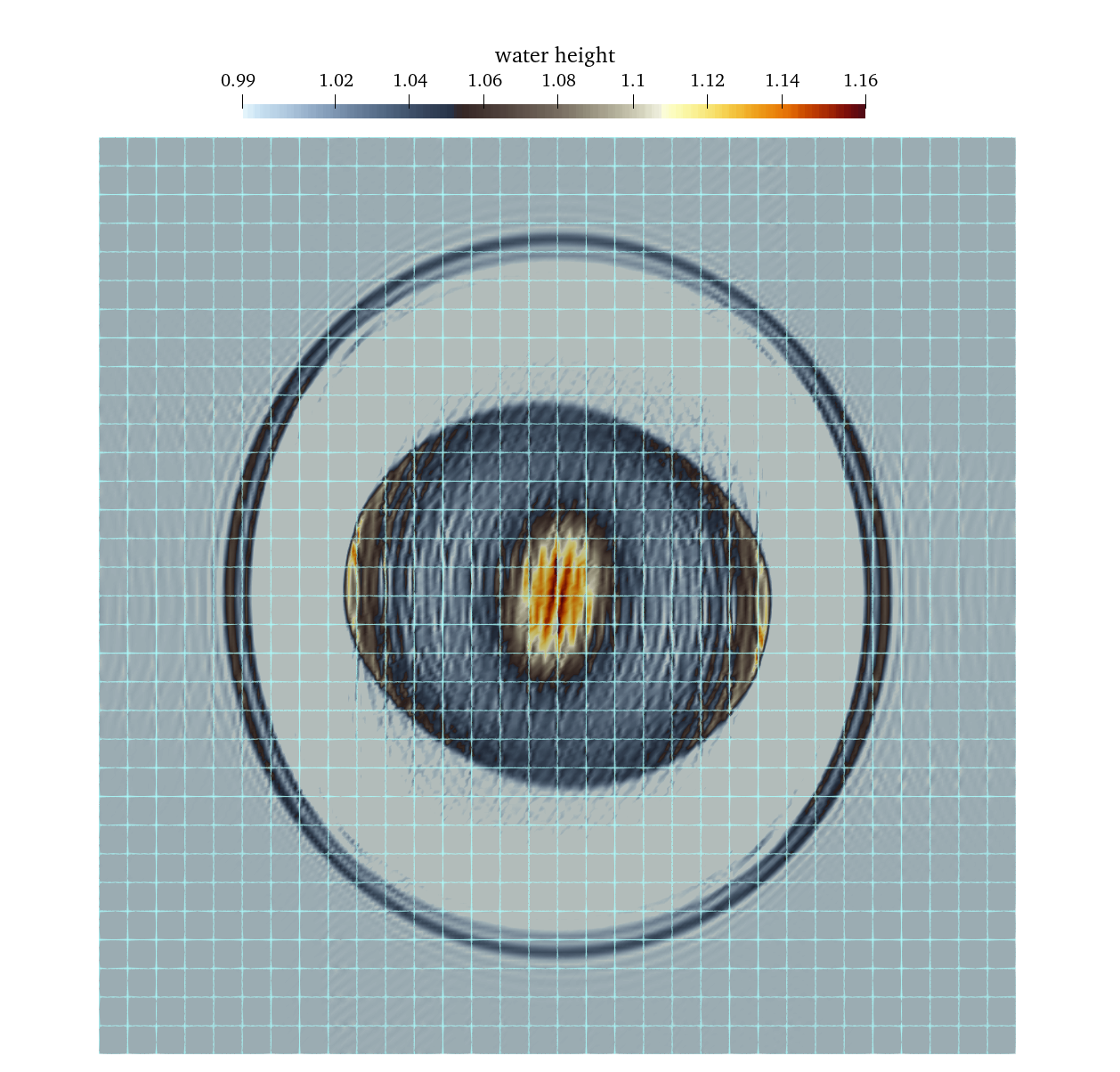}
    }
    \hspace*{0.2cm}
    \subfloat[$t=12$]
    {
        \includegraphics[width=0.3\textwidth, trim={115 30 110 0}, clip]{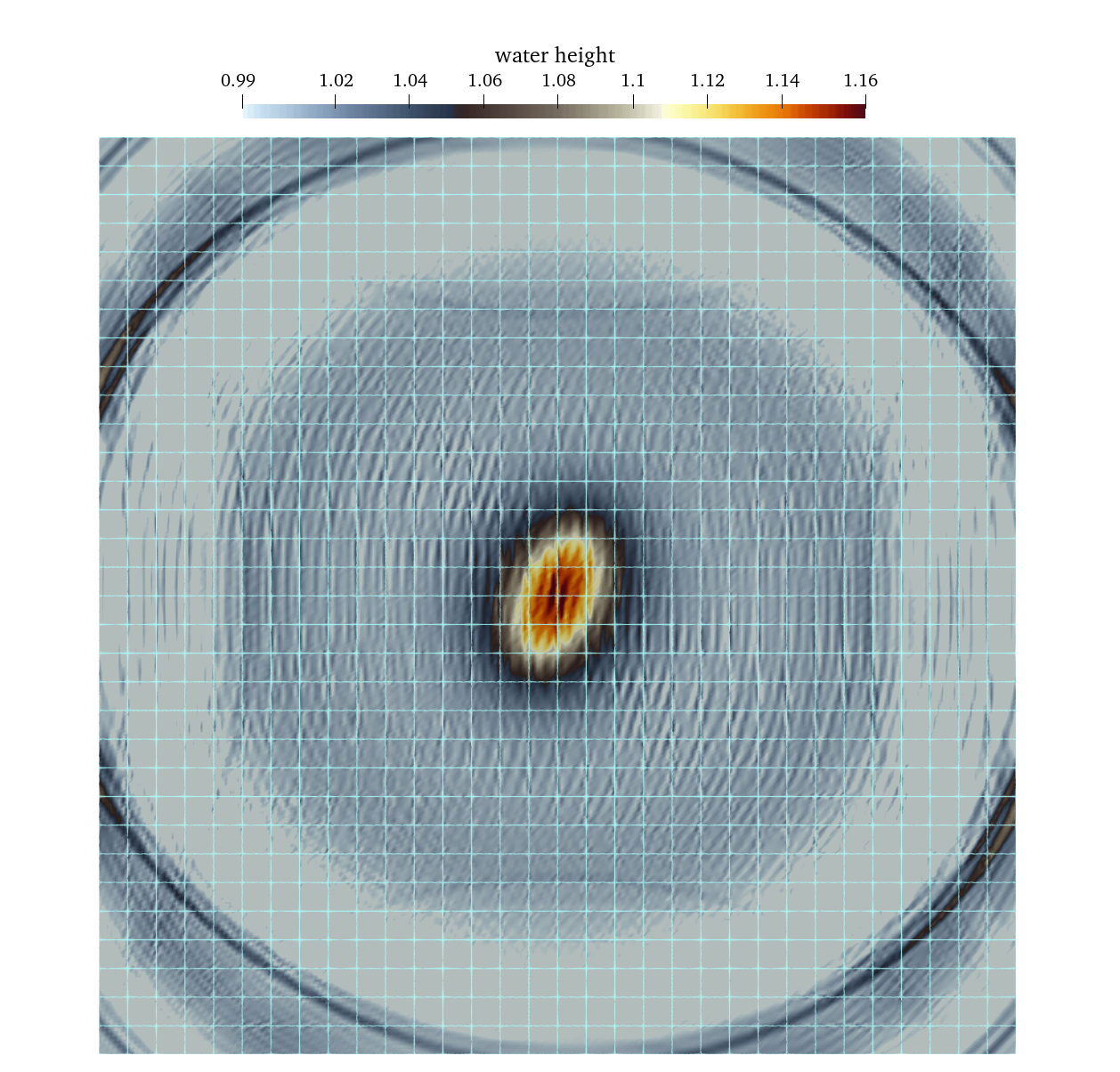}
    }
    \\
    \subfloat[$t=25$]
    {
        \includegraphics[width=0.3\textwidth, trim={115 30 130 30}, clip]{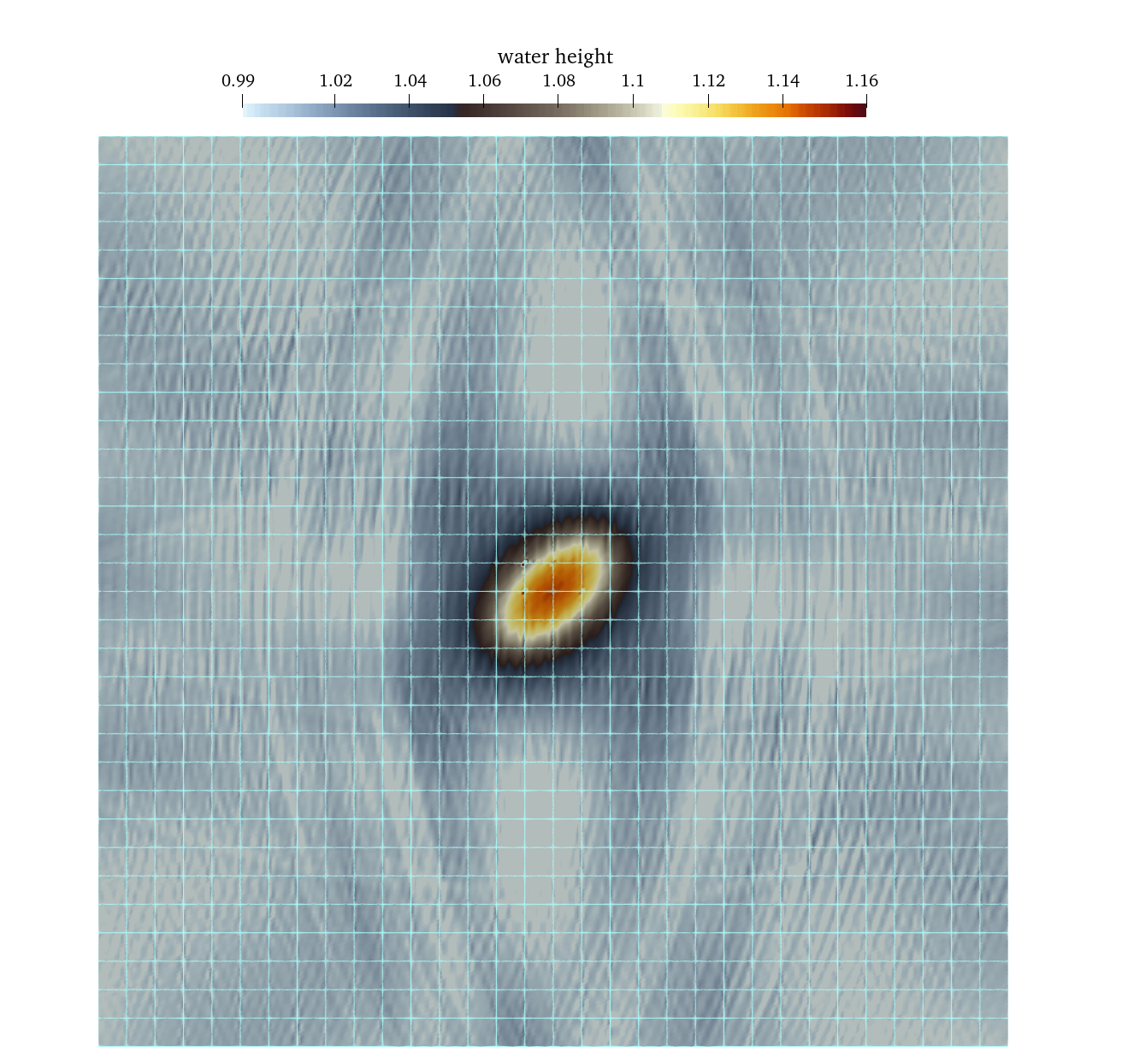}
    }
    \hspace*{0.2cm}
    \subfloat[$t=50$]
    {
        \includegraphics[width=0.3\textwidth, trim={115 30 130 30}, clip]{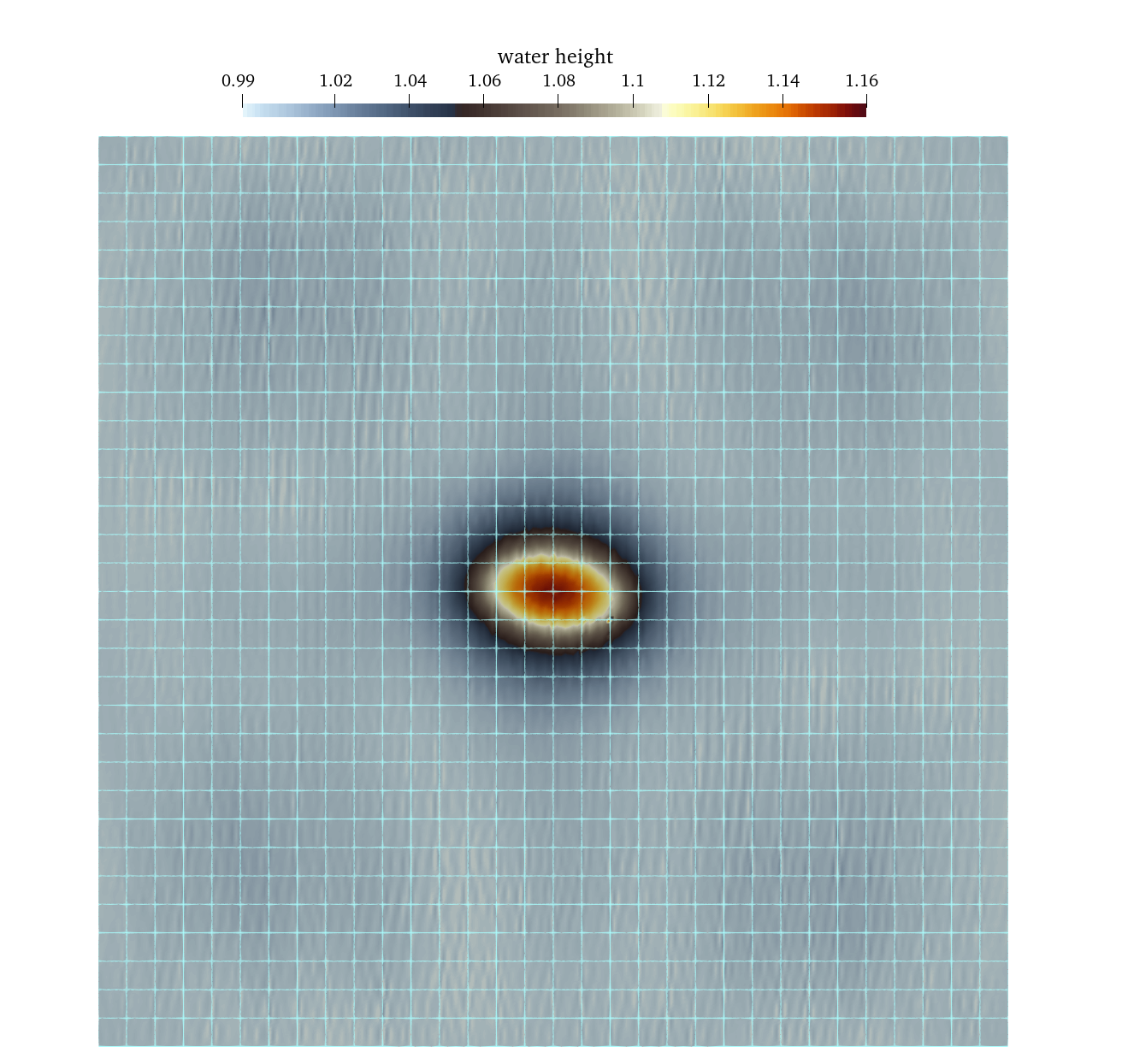}
    }
    \hspace*{0.2cm}
    \subfloat[$t=100$]
    {
        \includegraphics[width=0.3\textwidth, trim={115 30 130 30}, clip]{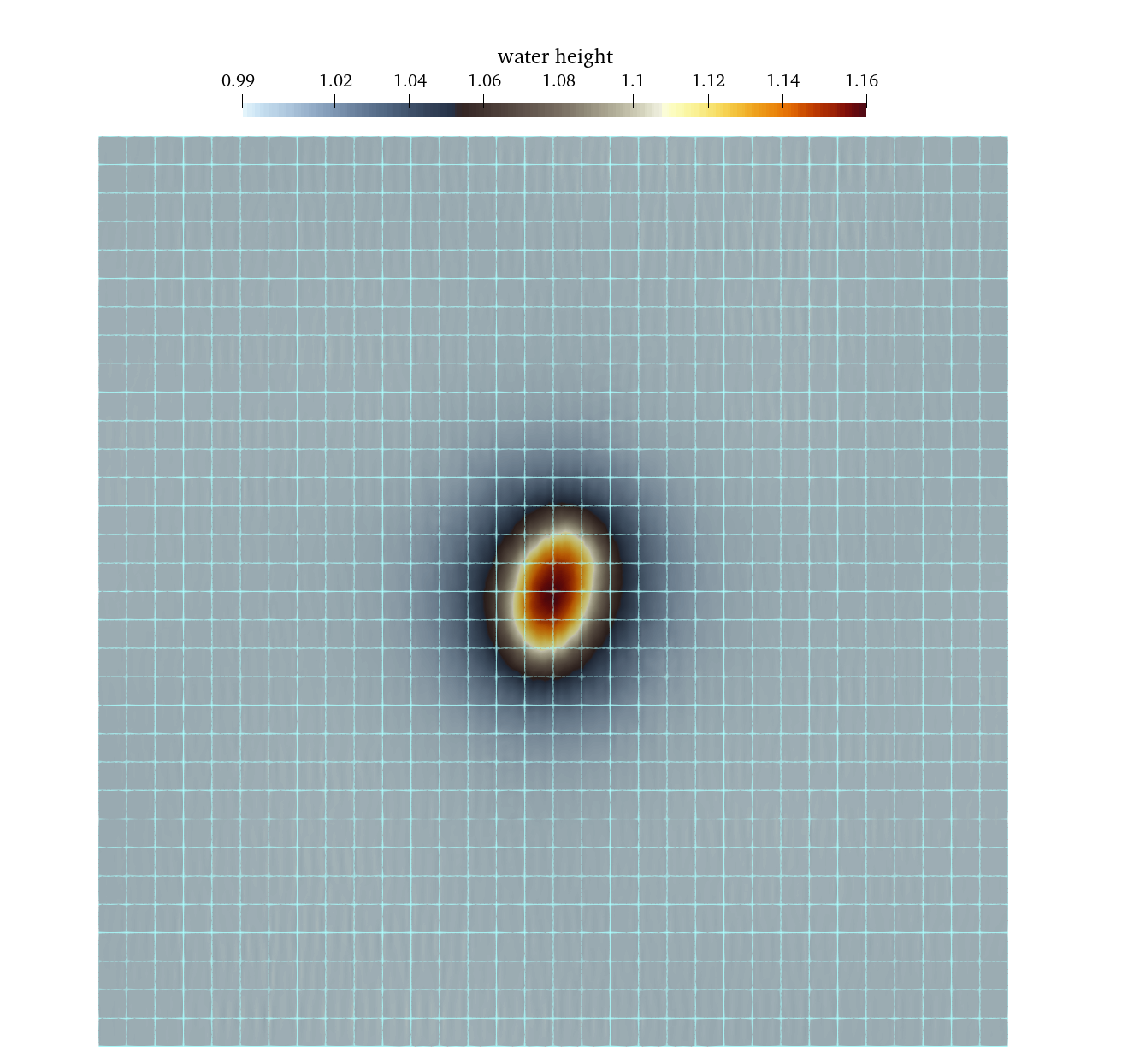}
    }
    \caption{Geostrophic adjustment solution at six times where the subcritical outflow boundary is imposed with the new boundary flux \eqref{eq:subcrit-outflow}
    and the EC flux
    used at interior interfaces.
    The mesh is $32 \times 32$ Cartesian elements with polynomials of degree $N=8$ in each direction.
    (a)--(c): Initial condition and propagation of gravitational waves as they first interact with the outflow boundary where there are some artificial reflections.
    (d)--(f): Rotating, elliptical solution at later times.}
    \label{fig:geo-novel}
\end{figure}
There are some artificial reflections present early on, around $t = 25$, but over time these spurious waves 
propagate out through the outer boundary and the rotating, elliptical solution is attained.
The new boundary flux bounds the solution by data, but no other properties were considered when deriving them.
Despite this, the boundary flux \eqref{eq:subcrit-outflow} strongly damps nonphysical reflections at the outflow boundaries.

We compare the new boundary flux \eqref{eq:subcrit-outflow} to subcritical outflow boundary conditions imposed with tools from linear analysis. 
The theory of characteristics provides a technique to extrapolate internal solution information to establish unknown variables at the boundaries, see \cite{ginting2019central,song2011robust,hou2013robust,yoon2004finite,kuiry2008finite,sleigh1998unstructured} for details.
The (extrapolated) external and internal solution states are then sent into the Riemann solver to impose the linearized outflow boundary condition.

We run the same test problem where the subcritical outflow boundary conditions are imposed with the LLF flux using the linear analysis Riemann invariant approach \cite{song2011robust}.
At interior interfaces, again, we use the dissipation-free EC flux.
For this configuration and a resolution of $32 \times 32$ Cartesian elements with polynomials of degree $N=8$ in each direction, the simulation crashes at $t\approx 19.5$.
This is also true if, instead, we use the more sophisticated HLL \cite{harten1983upstream} flux to impose the outflow boundary conditions.
So, in this situation, neither of the ``classic'' Riemann solvers are able to complete the simulation, whereas the new boundary fluxes ran without issue.
These results highlight (i) the superior performance of the new fluxes and (ii) that one should be cautious when applying linear analysis and approximation techniques to nonlinear problems. 

If we exchange the EC flux for the LLF (or HLL) flux at interior element interfaces to add extra dissipation, the simulations run successfully.
The water height computed with LLF at internal interfaces and weakly imposed outflow boundary via the Riemann invariant boundary conditions \cite{song2011robust} are presented in Fig.~\ref{fig:geo-riemann} at five times.
\begin{figure}[!ht]
   \centering
    \subfloat[$t=0$]
    {
        \includegraphics[width=0.3\textwidth, trim={115 30 130 30}, clip]{novel_t0_geo.png}
    }
    \hspace*{0.2cm}
    \subfloat[$t=6$]
    {
        \includegraphics[width=0.3\textwidth, trim={115 30 110 0}, clip]{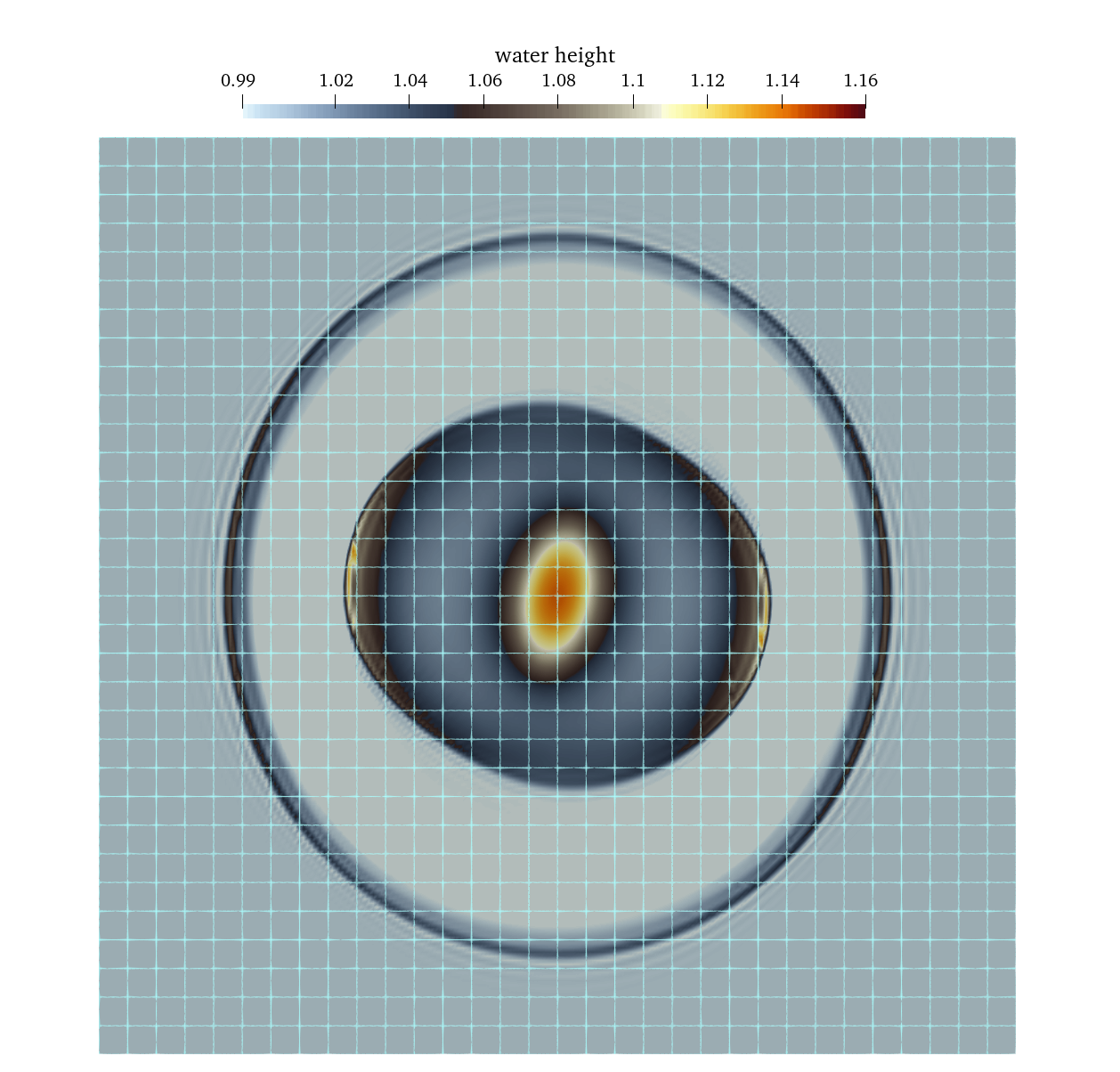}
    }
    \hspace*{0.2cm}
    \subfloat[$t=12$]
    {
        \includegraphics[width=0.3\textwidth, trim={115 30 110 0}, clip]{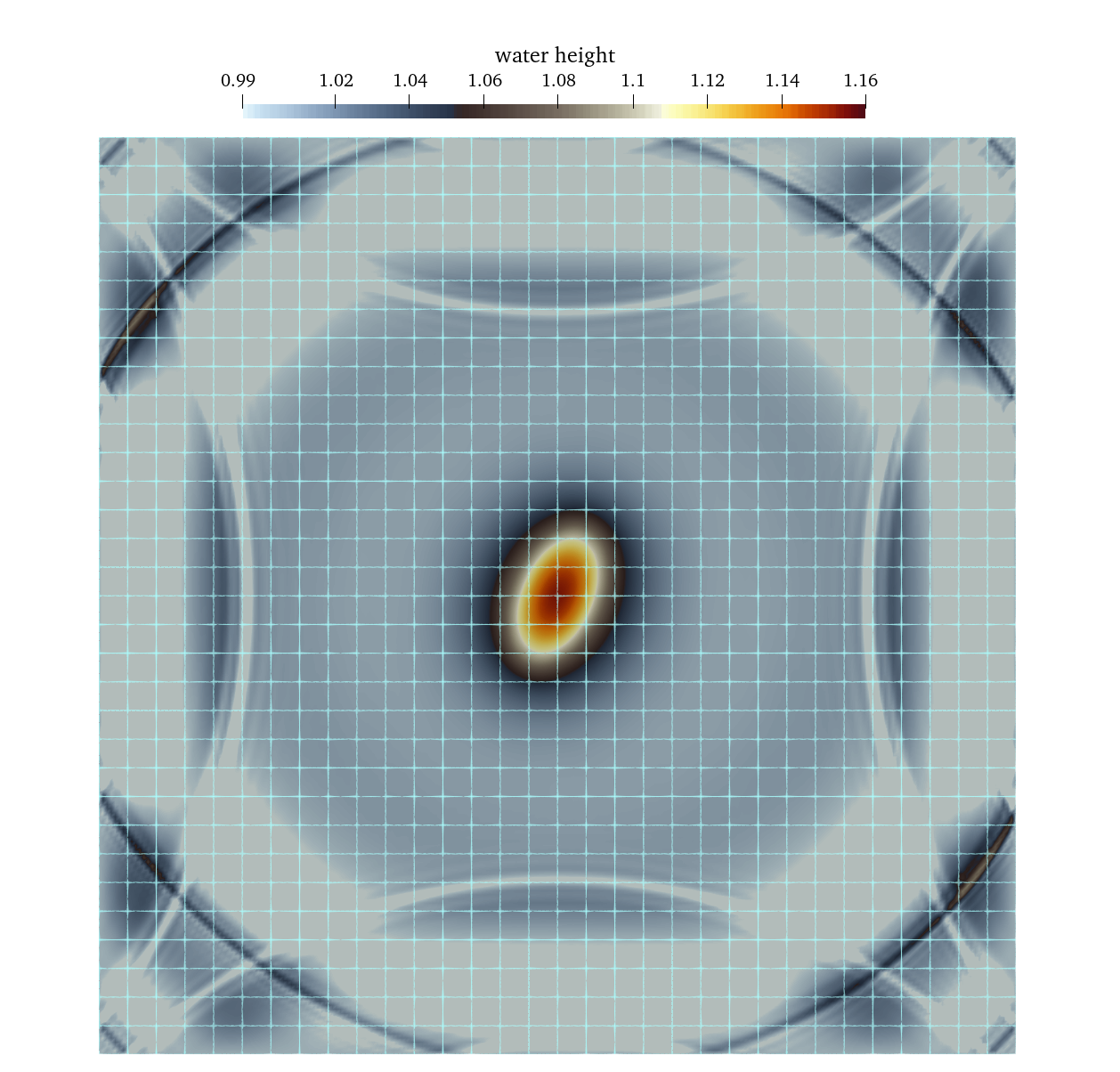}
    }
    \\
    \subfloat[$t=25$]
    {
        \includegraphics[width=0.3\textwidth, trim={115 30 130 30}, clip]{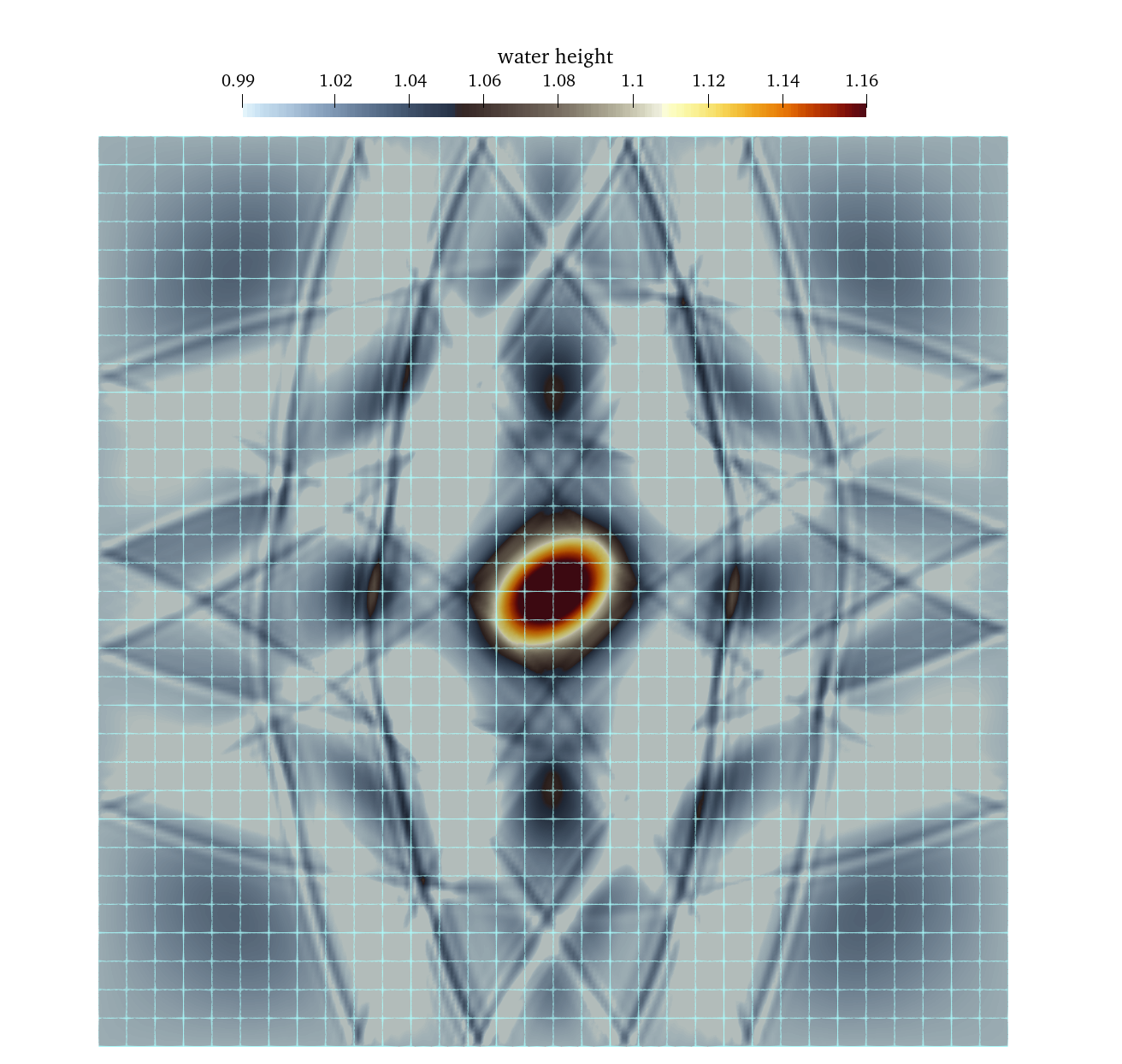}
    }
    \hspace*{0.2cm}
    \subfloat[$t=50$]
    {
        \includegraphics[width=0.3\textwidth, trim={115 30 130 30}, clip]{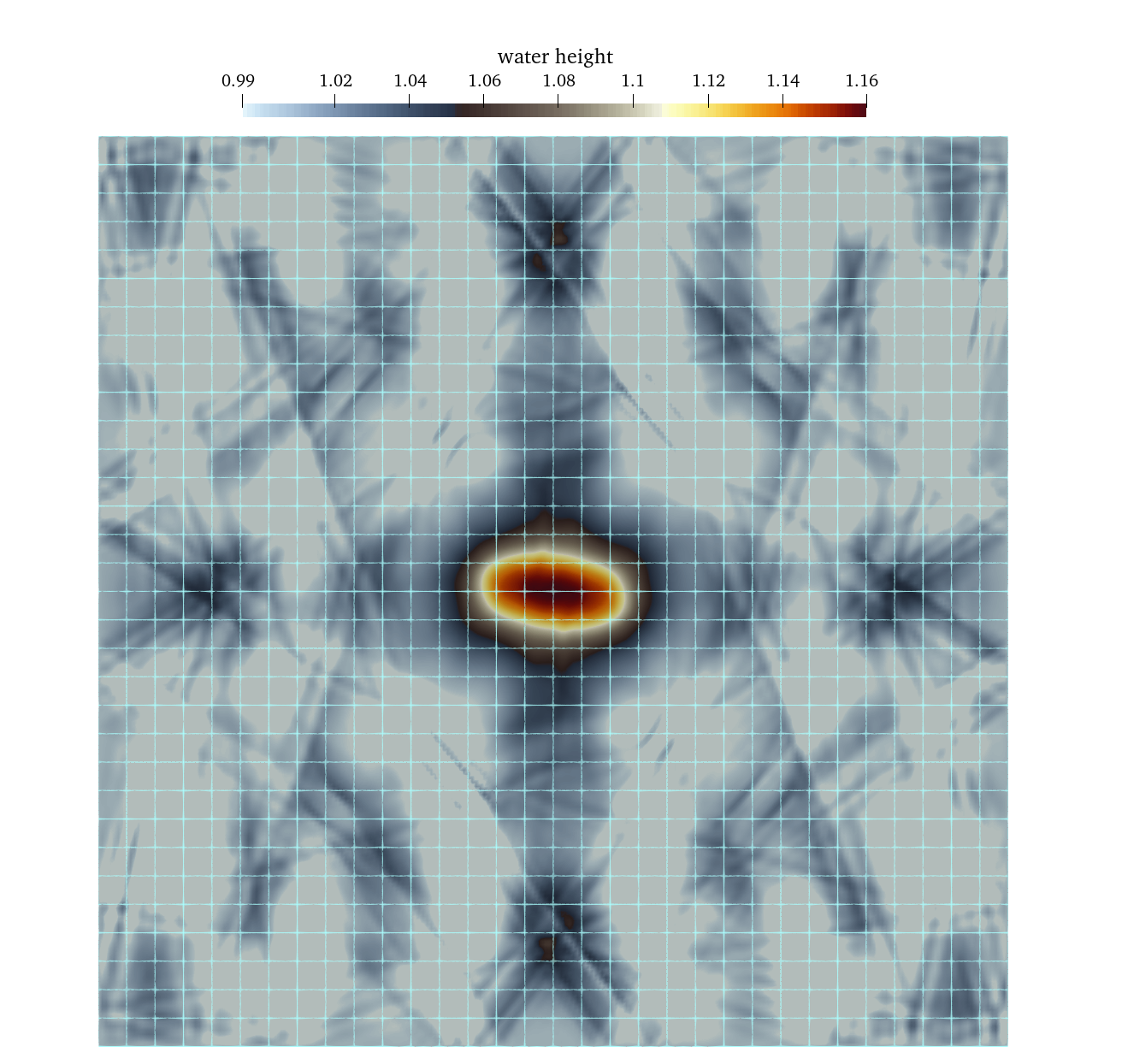}
    }
    \hspace*{0.2cm}
    \subfloat[$t=100$]
    {
        \includegraphics[width=0.3\textwidth, trim={115 30 130 30}, clip]{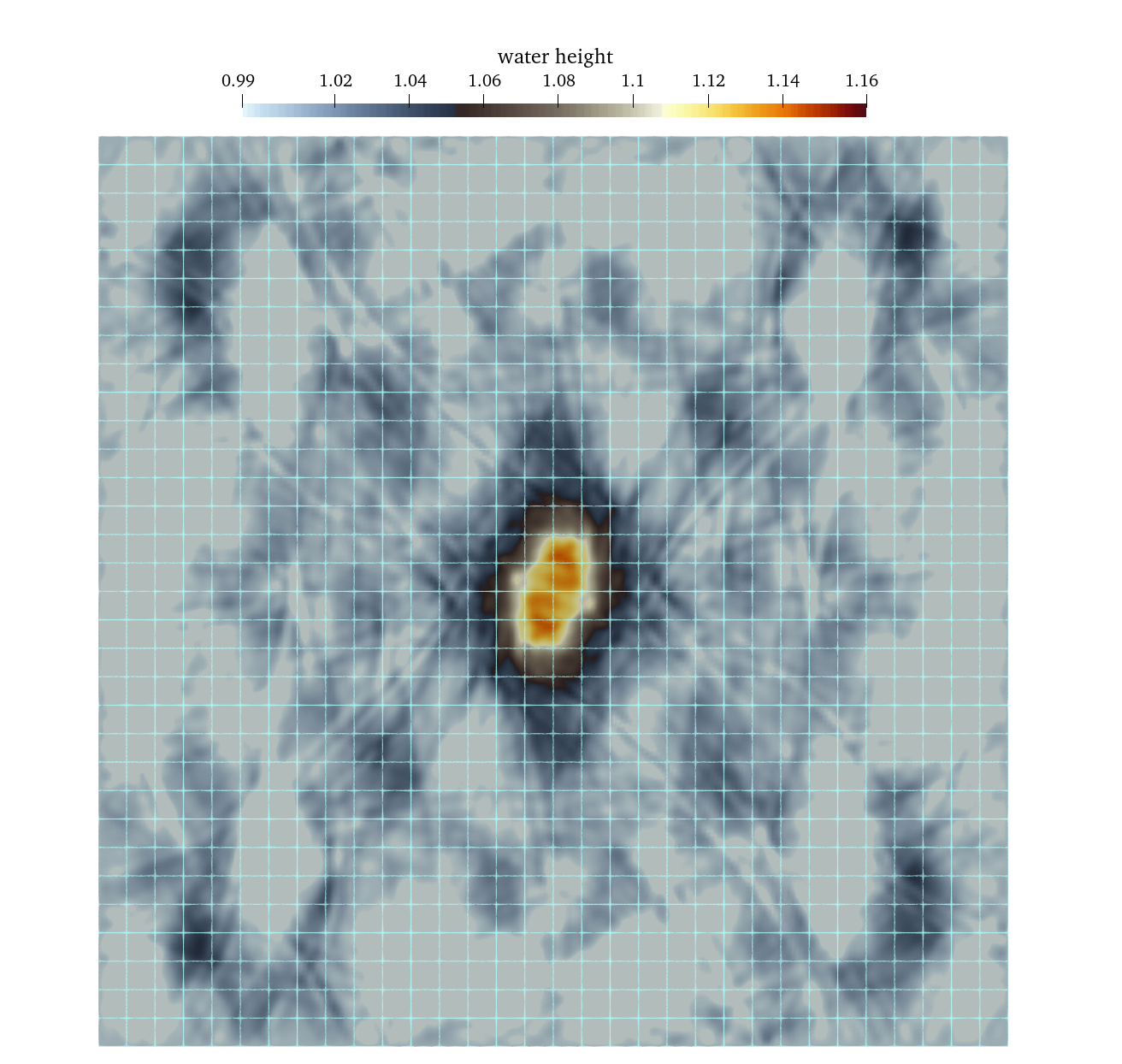}
    }
    \caption{Geostrophic adjustment solution at six times where the subcritical outflow boundary is imposed from linear analysis tools that penalizes the Riemann invariants. The LLF flux is used at the physical boundaries as well as interior interfaces, the latter introduces sufficient dissipation for the simulation to successfully run to the final time. 
    The mesh is $32 \times 32$ Cartesian elements with polynomials of degree $N=8$ in each direction.
    (a)--(c): Initial condition and propagation of gravitational waves as they first interact with the outflow boundary where there are significant artificial reflections.
    (d)--(f): Rotating, elliptical solution at later times polluted with reflections from the outflow boundaries.}
    \label{fig:geo-riemann}
\end{figure}
The flow in Fig.~\ref{fig:geo-riemann} is heavily polluted with spurious waves caused by artificial reflections at the outflow boundaries.
The solution quality obtained from the Riemann invariant boundary conditions and LLF flux in Fig.~\ref{fig:geo-riemann}(d) is far worse compared to the one using the new boundary fluxes (with {\it no} additional internal dissipation) shown in Fig.~\ref{fig:geo-novel}(d).
Similar results were found using the HLL numerical flux at the physical boundaries, and the spurious reflections in the approximate solution remained.
Overall, these results highlight, again, that standard linear tools applied to a smooth nonlinear problem \textit{might} keep a simulation bounded, but there are no guarantees. With the new flux, we have a {\it proven} bound.


\section{Concluding remarks}

We have shown how to interpret nonlinearly stable open boundary treatments as numerical boundary flux functions.
These fluxes penalize (nonlinear) characteristic-type variables at inflow-outflow boundaries.
Incorporated into a high-order split form DGSEM, the nonlinear solution including open boundaries is provably bounded by specified external data.

We explicitly demonstrated how to create such boundary flux functions for the Burgers equation and the two-dimensional shallow water equations.
For the shallow water equations, we identified a congruence transformation that defines the (nonlinear) characteristic variables and preserved the required number of boundary conditions from linear theory as the flow transitions from subcritical to supercritical.

In the numerical tests, we used manufactured solutions for the Burgers and shallow water equations to demonstrate the stability of the new (nonlinear) characteristic boundary fluxes.
We compared these results with weak boundary condition methods from linear analysis and Riemann solvers.
We found that these standard approaches \textit{might} produce bounded solutions, but their success is problem dependent and not guaranteed, unlike the new fluxes, where a \textit{provable} bound exists.

We also tested an equilibrium solution for the rotating shallow water equations, where an elliptical water height disturbance is balanced by Coriolis forces.
At fixed resolution, the solution quality obtained using the new boundary fluxes was significantly improved, with markedly lower artificial reflections, compared to that from boundary states derived from the linear analysis.

Investigating the surprising but welcomed non-reflective properties of the new boundary treatments is the subject of future research.
Additionally, we plan to extend the translation strategy described in this work to other systems of nonlinear hyperbolic conservation laws, such as the compressible Euler equations.

\section*{Acknowledgments}

Andrew Winters was supported by Vetenskapsr{\aa}det, Sweden (award no.~2020-03642 VR).
David Kopriva was supported by grants from the Simons Foundation (\#426393, \#961988).
Jan Nordstr\"{o}m was supported by Vetenskapsr{\aa}det, Sweden [award no. 2021-05484 VR] and University
of Johannesburg Global Excellence and Stature Initiative Funding

\section*{CRediT authorship contribution statement}

{\bf Andrew R. Winters:} Conceptualization; Formal analysis; Methodology; Investigation; Visualization; Software; Writing - original draft
{\bf David A. Kopriva:} Conceptualization; Formal analysis;  Methodology; Writing - original draft
{\bf Jan Nordstr\"{o}m:} Conceptualization; Methodology; Writing - original draft
	
\section*{Data Availability}

A reproducibility repository with necessary instructions and code to reproduce the presented numerical results are available in \cite{winters2025numericalRepro}.
	
\section*{Declaration of competing interest}

The authors declare that they have no known competing financial interests or personal relationships that could have appeared to influence the work reported in this paper.

\appendix



\section[Derivation of the scaling matrix T]{Derivation of the scaling matrix $\TMat$ in \eqref{eq:congruence}}\label{app:congrunet}


We begin with an arbitrary transformation matrix
\begin{equation}
    \TMat
    =
    \begin{pmatrix}
        t_1 & t_2 & t_3\\[0.1cm]
        t_4 & t_5 & t_6\\[0.1cm]
        t_7 & t_8 & t_9\\[0.1cm]
    \end{pmatrix}
\end{equation}
and compute the left side of the relation \eqref{eq:congruence}
\begin{equation}
\label{eq:TMat1}
\resizebox{0.9\linewidth}{!}{$    
    \TMat\,\LambMat\,\TMat^T
    =
   \begin{pmatrix} 
    \left(t_{3}^2+t_{2}^2+t_{1}^2\right)
    v_n+c \left(t_{3}^2-t_{1}^2\right) & \left(t_{3} t_{6}+
    t_{2} t_{5}+t_{1} t_{4}\right)v_n+c \left(t_{3} t_{6}-
    t_{1} t_{4}\right) & \left(t_{3} t_{9}+t_{2} t_{8}+t_{1} t_{7}
    \right)v_n+c \left(t_{3} t_{9}-t_{1} t_{7}\right) \\ 
    \left(t_{3} t_{6}+t_{2} t_{5}+t_{1} t_{4}\right)v_n+c 
    \left(t_{3} t_{6}-t_{1} t_{4}\right) & \left(t_{6}^2+t_{5}^2+t_{4}
    ^2\right)v_n+c \left(t_{6}^2-t_{4}^2\right) & \left(t_{6} 
    t_{9}+t_{5} t_{8}+t_{4} t_{7}\right)v_n+c \left(t_{6} 
    t_{9}-t_{4} t_{7}\right) \\ \left(t_{3} t_{9}+t_{2} t_{8}+t_{1} 
    t_{7}\right)v_n+c \left(t_{3} t_{9}-t_{1} t_{7}\right) & 
    \left(t_{6} t_{9}+t_{5} t_{8}+t_{4} t_{7}\right)v_n+c 
    \left(t_{6} t_{9}-t_{4} t_{7}\right) & \left(t_{9}^2+t_{8}^2+t_{7}
    ^2\right)v_n+c \left(t_{9}^2-t_{7}^2\right)
   \end{pmatrix}.
    $}
\end{equation}

As the value of $c$ is not present on the diagonal of the matrix $\AMat$ in \eqref{eq:congruence} we determine from \eqref{eq:TMat1} that
\begin{equation}
    t_1^2 = t_3^2,\quad t_4^2 = t_6^2,\quad\text{and}\quad t_7^2 = t_9^2.
\end{equation}
This yields several configurations to consider.
If we take $t_3=t_1$ and $t_6=t_4$ or $t_3=-t_1$ and $t_6=-t_4$, after many manipulations, we fail to create a matrix $\TMat$ that satisfies the congruence relation \eqref{eq:congruence}.
So, as a first substitution in \eqref{eq:TMat1} we take
\begin{equation}
    t_3 = t_1, \quad t_6 = -t_4, \quad \text{and} \quad t_9 = t_7,
\end{equation}
to find
\begin{equation}
\label{eq:TMat2}
    \TMat\,\LambMat\,\TMat^T
    =
    \begin{pmatrix} 
    \left(t_{2}^2+2 t_{1}^2\right)v_n & t_{2} t_{5}v_n-2 c t_{1} t_{4} & \left(t_{2} 
    t_{8}+2 t_{1} t_{7}\right)v_n \\ t_{2} t_{5}v_n-2 
    c t_{1} t_{4} & \left(t_{5}^2+2 t_{4}^2\right)v_n & t_{5}
     t_{8}v_n-2 c t_{4} t_{7} \\ \left(t_{2} t_{8}+2 t_{1}
     t_{7}\right)v_n & t_{5} t_{8}v_n-2 c t_{4} t_{7}
     & \left(t_{8}^2+2 t_{7}^2\right)v_n
    \end{pmatrix}.
\end{equation}
Next, we examine the second entry in the first column of \eqref{eq:TMat2} and the target matrix $\AMat$ where we require
\begin{equation}
    t_{2} t_{5}v_n-2 c t_{1} t_{4} = \alpha c.
\end{equation}
This provides two pieces of information
\begin{equation}
    t_2 = 0\quad\text{or}\quad t_5 = 0 \quad\text{and}\quad t_4 = -\frac{\alpha}{2t_1}.
\end{equation}
We take $t_5 = 0$ and substitute the expression for $t_4$ into \eqref{eq:TMat2} to have
\begin{equation}
    \label{eq:TMat3}
    \TMat\,\LambMat\,\TMat^T
    =
    \begin{pmatrix} 
    \left(t_{2}^2+2 t_{1}^2\right) v_n & \alpha c & \left(t_{2} t_{8}+2 t_{1} t_{7}\right)
     v_n \\ \alpha c & \frac{\alpha^2 v_n}{2 t_{1}^2} & \frac{\alpha c 
    t_{7}}{t_{1}} \\ \left(t_{2} t_{8}+2 t_{1} t_{7}\right) v_n
     & \frac{\alpha c t_{7}}{t_{1}} & \left(t_{8}^2+2 t_{7}^2\right) v_n
    \end{pmatrix}.
\end{equation}
Comparing the second diagonal element of \eqref{eq:TMat3} to \eqref{eq:congruence}
gives
\begin{equation}
    \frac{\alpha^2 v_n}{2 t_{1}^2} = v_n \quad \Rightarrow \quad t_1^2 = \frac{\alpha^2}{2},
\end{equation}
and the second entry in the third column yields
\begin{equation}
    \frac{\alpha c t_{7}}{t_{1}} = 0 \quad\Rightarrow\quad t_7=0,
\end{equation}
since we now know that $t_1$ is nonzero.
Substituting these two pieces of information, \eqref{eq:TMat3} simplifies to become
\begin{equation}
    \label{eq:TMat4}
    \TMat\,\LambMat\,\TMat^T
    =
    \begin{pmatrix} 
    \left(t_{2}^2+\alpha^2\right) v_n & \alpha c & t_{2} t_{8} v_n \\ 
    \alpha c & v_n & 0 \\ 
    t_{2} t_{8} v_n & 0 & t_{8}^2 v_n
    \end{pmatrix}.
\end{equation}
From the first entry in the first column of \eqref{eq:TMat4} and \eqref{eq:congruence} we see
\begin{equation}
    \left(t_{2}^2+\alpha^2\right) v_n = \alpha^2 v_n \quad\Rightarrow\quad t_2 = 0,
\end{equation}
and the third entry in the third column gives
\begin{equation}
    t_8^2 v_n = v_n \quad\Rightarrow\quad t_8 = \pm 1.
\end{equation}

Collecting the information for the $\TMat$ matrix entries,
\begin{equation}
    \label{eq:T-choices}
    \begin{aligned}
    t_1 &= t_3 = \pm\frac{\alpha}{\sqrt{2}}, \quad t_4 = -\frac{\alpha}{2t_1} = \mp \frac{1}{\sqrt{2}}, \quad t_6 = -t_4 = \pm\frac{1}{\sqrt{2}},\\[0.1cm]
    t_8 &= \pm 1, \quad t_2 = t_5 = t_7 = t_9 = 0.
    \end{aligned}
\end{equation}
As a final step, we make the choice of positive $t_1$ and $t_8$ values that propagates through the remaining terms in \eqref{eq:T-choices} to arrive at the transformation matrix stated in \eqref{eq:TMat}.

\section{Shallow water numerical boundary flux derivations}

In the derivations below it is convenient to restate and expand the relationship \eqref{eq:swe_relation2}
\begin{equation}
\begin{aligned}
   \label{app:commonCondition}
   \statevec{F}^*_n - \statevec{F}_n &= 2\invMMatT\NMat^T\SMat\,\TMat\,\IMinus\sqrtLambM\left(\sqrtLambM\Wminus - \Gvec\right)\\
   &= 2\invMMatT\NMat\SMat\,\TMat\,\IMinus\absLambM\Wminus - 2\invMMatT\NMat^T\SMat\,\TMat\,\IMinus\sqrtLambM\Gvec
   \\
   &= \text{\textcircled{1}} - \text{\textcircled{2}}.
\end{aligned}   
\end{equation}
First, we consider the term \textcircled{1} to identify the internal flux contributions $\statevec{F}_n$. Then we incorporate the boundary data from term \textcircled{2} that includes the external data vector $\Gvec$.

\subsection[Derivation of subcritical outflow boundary flux]{Derivation of subcritical outflow boundary flux \eqref{eq:subcrit-outflow}}\label{app:subcrit-outflow}

For subcritical outflow regime we have that $v_n > 0$ and $v_n < c$ where $c = \sqrt{gh}$.
From the diagonal matrix \eqref{eq:switch-matrix} there is one negative term so $\IMinus = \text{diag}(1,\,0,\,0)$ and the incoming characteristic variable is
\begin{equation}
    \Wminus = \frac{c}{2\sqrt{g}}\begin{pmatrix}
        \alpha c - v_n\\[0.1cm]
        0\\[0.1cm]
        0
    \end{pmatrix}.
\end{equation}
From the definition of the absolute value we have
\begin{equation}
    |v_n - c| = -(v_n - c) = c - v_n,
\end{equation}
so that
\begin{equation}
\absLambM = \text{diag}(|v_n - c|,\,0,\,0) = \text{diag}(c - v_n,\,0,\,0).
\end{equation}
We expand the first term in \eqref{app:commonCondition}, collect like terms, and apply the forms of $\IMinus$, $\absLambM$, and $\Wminus$ above to obtain
\begin{equation}
    \label{app:subcritTerm1}
    \text{\textcircled{1}}
    =
    \frac{c}{2g}
    \begin{pmatrix}
        \alpha v_n^2 + \alpha^2 c^2 + \alpha c v_n\\\hdashline
        \frac{\alpha}{2} v_1 v_n^2 - \frac{\alpha}{2} c v_1 v_n + \frac{\alpha^2}{2} c^2 v_1 + c v_1 v_n + \alpha c v_1 v_n + \alpha c^2 v_n n_1 + c^2 v_n n_1 - c v_n^2 n_1 + \frac{\alpha^2}{2} c^3 n_1\\\hdashline
        \frac{\alpha}{2} v_2 v_n^2 - \frac{\alpha}{2} c v_2 v_n + \frac{\alpha^2}{2} c^2 v_2 + c v_2 v_n + \alpha c v_2 v_n + \alpha c^2 v_n n_2 + c^2 v_n n_2 - c v_n^2 n_2 + \frac{\alpha^2}{2} c^3 n_2
    \end{pmatrix}
    -
    \statevec{F}_n.
\end{equation}
This identifies the internal flux contributions in the normal direction, $\statevec{F}_n$, as well as remaining terms to be built into the boundary flux function.

Next, we consider the boundary vector ansatz \eqref{eq:GvecAnsatz} and external rotated and scaled primitive variables  \eqref{eq:primVarsAnsatz}.
For the subcritical outflow regime we have
\begin{equation}
    \absLambMExt = \text{diag}(\cext - \vnext,\, 0,\, 0),
\end{equation}
so that
\begin{equation}
    \Gvec = \sqrt{\absLambMExt}\WminusExt
    =
    \frac{\cext}{2\sqrt{g}}\begin{pmatrix}
        \sqrt{\cext - \vnext}(\alpha \cext - \vnext)\\0\\0
    \end{pmatrix}.
\end{equation}
To simplify the presentation we rewrite terms with the geometric mean
\begin{equation}
    \sqrt{\absLambM\,\absLambMExt}
    =
    \text{diag}
    \left(
    \sqrt{(c - v_n)(\cext - \vnext)},\,0,\,0
    \right)
    =
    \text{diag}
    \left(
    \geo{c - v_n},\,0,\,0
    \right).
\end{equation}
In this notation, the second term from \eqref{app:commonCondition} is
\begin{equation}
    \label{app:subcritTerm2}
    \text{\textcircled{2}}
    =
    \geo{c - v_n} \frac{\cext}{2g}
    \begin{pmatrix}
        \alpha(\alpha\cext - \vnext)\\[0.1cm]
        \frac{1}{2}(\alpha\cext-\vnext)(\alpha v_1 - 2cn_1)\\[0.1cm]
        \frac{1}{2}(\alpha\cext-\vnext)(\alpha v_2 - 2cn_2)        
    \end{pmatrix}.
\end{equation}
We insert the contributions from \eqref{app:subcritTerm1} and \eqref{app:subcritTerm2} into \eqref{app:commonCondition} and find
\begin{equation}
   \begin{aligned}
   \statevec{F}^*_n - \statevec{F}_n
   &=
    \text{\textcircled{1}} - \text{\textcircled{2}} \\
    &=
    \frac{c}{2g}
    \begin{pmatrix}
        \alpha v_n^2 + \alpha^2 c^2 + \alpha c v_n\\\hdashline
        \frac{\alpha}{2} v_1 v_n^2 - \frac{\alpha}{2} c v_1 v_n + \frac{\alpha^2}{2} c^2 v_1 + c v_1 v_n + \alpha c v_1 v_n + \alpha c^2 v_n n_1 + c^2 v_n n_1 - c v_n^2 n_1 + \frac{\alpha^2}{2} c^3 n_1\\\hdashline
        \frac{\alpha}{2} v_2 v_n^2 - \frac{\alpha}{2} c v_2 v_n + \frac{\alpha^2}{2} c^2 v_2 + c v_2 v_n + \alpha c v_2 v_n + \alpha c^2 v_n n_2 + c^2 v_n n_2 - c v_n^2 n_2 + \frac{\alpha^2}{2} c^3 n_2
    \end{pmatrix}
    \\
    &\quad -
    \geo{c - v_n} \frac{\cext}{2g}
    \begin{pmatrix}
        \alpha(\alpha\cext - \vnext)\\\hdashline
        \frac{1}{2}(\alpha\cext-\vnext)(\alpha v_1 - 2cn_1)\\\hdashline
        \frac{1}{2}(\alpha\cext-\vnext)(\alpha v_2 - 2cn_2)
    \end{pmatrix}
    -
    \statevec{F}_n.
   \end{aligned}
\end{equation}
The terms above that do not involve $\statevec{F}_n$ define the numerical boundary flux function, $\statevec{F}_n^*$. After many algebraic manipulations that use $\alpha^2 + 2\alpha - 2 = 0$, $v_1 = n_1 v_n - n_2 v_\tau$, and $v_2 = n_2 v_n + n_1 v_\tau$, we arrive at the given expression for the numerical boundary flux function given in \eqref{eq:subcrit-outflow}.

\subsection[Derivation of subcritical inflow boundary flux]{Derivation of subcritical inflow boundary flux \eqref{eq:subcrit-inflow}}\label{app:subcrit-inflow}

For subcritical inflow regime we have that $v_n < 0$ and $|v_n| < c$.
From the diagonal matrix \eqref{eq:switch-matrix} there are two negative terms so that $\IMinus = \text{diag}(1,\,1,\,0)$ and the incoming characteristic variables are
\begin{equation}
    \Wminus = \frac{c}{2\sqrt{g}}\begin{pmatrix}
        \alpha c - v_n\\\sqrt{2}v_\tau\\0
    \end{pmatrix}.
\end{equation}
From the definition of the absolute value we have
\begin{equation}
    v_n = -|v_n| \quad\text{and}\quad |v_n - c| = -(-|v_n| - c) = |v_n| + c
\end{equation}
so that
\begin{equation}
\absLambM = \text{diag}(|v_n - c|,\,|v_n|,\,0) = \text{diag}(|v_n| + c,\,|v_n|,\,0).
\end{equation}
Eschewing algebraic details, we expand the first term in \eqref{app:commonCondition}, collect like terms, and apply the form of $\IMinus$, $\absLambM$, and $\Wminus$ above to get
\begin{equation}
    \label{app:subcritInTerm1}
    \text{\textcircled{1}}
    =
    \frac{c}{2g}
    \begin{pmatrix}
        \alpha^2 c^2 - 2 \alpha c |v_n| - \alpha v_n |v_n| - \alpha c v_n
        \\\hdashline
        \frac{\alpha^2}{2} c v_1 |v_n| + \frac{\alpha^2}{2} c^2 v_1
         - \frac{\alpha}{2}v_1 v_n |v_n| - \frac{\alpha}{2}c v_1 v_n
        + \frac{\alpha^2}{2}c^3 n_1 - c v_n |v_n| n_1 + c^2 v_n n_1 - \alpha c^2 |v_n| n_1
        \\\hdashline
        \frac{\alpha^2}{2} c v_2 |v_n| + \frac{\alpha^2}{2} c^2 v_2
         - \frac{\alpha}{2} v_2 v_n |v_n| - \frac{\alpha}{2} c v_2 v_n
        + \frac{\alpha^2}{2} c^3 n_2 - c v_n |v_n| n_2 + c^2 v_n n_2 - \alpha c^2 |v_n| n_2
    \end{pmatrix}
    -
    \statevec{F}_n.
\end{equation}
We identify the internal flux contributions in the normal direction, $\statevec{F}_n$, as well as remaining contributions to be built into the boundary flux function.

Next, we consider the boundary data ansatz \eqref{eq:GvecAnsatz} and external rotated and scaled primitive variables \eqref{eq:primVarsAnsatz}.
For the subcritical inflow regime we have
\begin{equation}
    \absLambMExt = \text{diag}(|\vnext| + \cext,\, |\vnext|,\, 0),
\end{equation}
so that
\begin{equation}
    \Gvec = \sqrt{\absLambMExt}\WminusExt
    =
    \frac{\cext}{2\sqrt{g}}
    \begin{pmatrix}
        \sqrt{|\vnext| +\cext}(\alpha \cext - \vnext)\\[0.1cm]
        \sqrt{2}\sqrt{|\vnext|}\vtext\\[0.1cm]
        0
    \end{pmatrix}.
\end{equation}
To simplify the presentation we, again, rewrite terms with the geometric mean
\begin{equation}
    \begin{aligned}
    \sqrt{\absLambM\,\absLambMExt}
    &=
    \text{diag}
    \left(
    \sqrt{(|v_n| + c)(|\vnext| + \cext)},\,\sqrt{|v_n|\,|\vnext|},\,0
    \right)\\[0.1cm]
    &=
    \text{diag}
    \left(
    \geo{|v_n| + c},\,\geo{|v_n|},\,0
    \right).
    \end{aligned}
\end{equation}
This notation, together with the simplifying principles
\begin{equation}
    c\,\cext = g\sqrt{h\hext} = g\geo{h}
    ,\quad
    -n_2\vtext = v_1^{\ext} - n_1 \vnext
    ,\quad
    n_1 \vtext = v_2^{\ext} - n_2 \vnext,
\end{equation}
we write the second term from \eqref{app:commonCondition} for subcritical inflow as
\begin{equation}
    \label{app:subcritInTerm2}
    \text{\textcircled{2}}
    =
    \begin{pmatrix}
        \frac{\alpha}{2g}\geo{|v_n| + c}\cext(\alpha\cext - \vnext)\\\hdashline
        \geo{|v_n|}\geo{h} v_1^{\ext} + \frac{\alpha}{4g}\left(\geo{|v_n| + c}\cext v_1 (\alpha \cext - \vnext)\right)
         \\[0.1cm]
        + \frac{1}{2}\geo{|v_n| + c}\geo{h}(\vnext-\alpha\cext) n_1 - \geo{|v_n|}\geo{h} \vnext n_1\\\hdashline
        \geo{|v_n|}\geo{h} v_2^{\ext} + \frac{\alpha}{4g}\left(\geo{|v_n| + c}\cext v_2 (\alpha \cext - \vnext)\right)
         \\[0.1cm]
        + \frac{1}{2}\geo{|v_n| + c}\geo{h}(\vnext-\alpha\cext) n_2 - \geo{|v_n|}\geo{h} \vnext n_2
    \end{pmatrix}
\end{equation}
We combine the contributions from \eqref{app:subcritInTerm1} and \eqref{app:subcritInTerm2} in \eqref{app:commonCondition} to obtain
\begin{equation}
   \begin{aligned}
   \statevec{F}_n^* - \statevec{F}_n
   &=
    \text{\textcircled{1}} - \text{\textcircled{2}} \\
    &=
    \frac{c}{2g}
    \begin{pmatrix}
        \alpha^2 c^2 - 2 \alpha c |v_n| - \alpha v_n |v_n| - \alpha c v_n
        \\\hdashline
        \frac{\alpha^2}{2} c v_1 |v_n| + \frac{\alpha^2}{2} c^2 v_1
         - \frac{\alpha}{2}v_1 v_n |v_n| - \frac{\alpha}{2}c v_1 v_n
        + \frac{\alpha^2}{2}c^3 n_1 - c v_n |v_n| n_1 + c^2 v_n n_1 - \alpha c^2 |v_n| n_1
        \\\hdashline
        \frac{\alpha^2}{2} c v_2 |v_n| + \frac{\alpha^2}{2} c^2 v_2
         - \frac{\alpha}{2} v_2 v_n |v_n| - \frac{\alpha}{2} c v_2 v_n
        + \frac{\alpha^2}{2} c^3 n_2 - c v_n |v_n| n_2 + c^2 v_n n_2 - \alpha c^2 |v_n| n_2
    \end{pmatrix}\\[0.1cm]
    &\quad -
    \begin{pmatrix}
        \frac{\alpha}{2g}\geo{|v_n| + c}\cext(\alpha\cext - \vnext)\\\hdashline
        \geo{|v_n|}\geo{h} v_1^{\ext} + \frac{\alpha}{4g}\left(\geo{|v_n| + c}\cext v_1 (\alpha \cext - \vnext)\right)
         \\[0.1cm]
        + \frac{1}{2}\geo{|v_n| + c}\geo{h}(\vnext-\alpha\cext) n_1 - \geo{|v_n|}\geo{h} \vnext n_1\\\hdashline
        \geo{|v_n|}\geo{h} v_2^{\ext} + \frac{\alpha}{4g}\left(\geo{|v_n| + c}\cext v_2 (\alpha \cext - \vnext)\right)
         \\[0.1cm]
        + \frac{1}{2}\geo{|v_n| + c}\geo{h}(\vnext-\alpha\cext) n_2 - \geo{|v_n|}\geo{h} \vnext n_2
    \end{pmatrix}
    -
    \statevec{F}_n    
    \end{aligned}
\end{equation}
The terms above that do not involve $\statevec{F}_n$ are those that define the numerical boundary flux function, $\statevec{F}_n^*$. After many algebraic manipulations that use $\alpha^2 + 2\alpha - 2 = 0$, $v_1 = n_1 v_n - n_2 v_\tau$, and $v_2 = n_2 v_n + n_1 v_\tau$ we arrive at the given expression for the numerical boundary flux function given in \eqref{eq:subcrit-inflow}. 

\subsection[Derivation of supercritical inflow boundary flux]{Derivation of supercritical inflow boundary flux \eqref{eq:supercrit-inflow}}\label{app:supercrit-inflow}

For supercritical inflow we have that $v_n < 0$ and $|v_n| > c$ where $c = \sqrt{gh}$.
From the diagonal matrix \eqref{eq:switch-matrix} there are three negative terms; so, $\IMinus = \text{diag}(1,\,1,\,1)$ and the incoming characteristic variables are
\begin{equation}
    \Wminus = \frac{c}{2\sqrt{g}}\begin{pmatrix}
        \alpha c - v_n\\[0.1cm]
        \sqrt{2}v_\tau\\[0.1cm]
        \alpha c + v_n
    \end{pmatrix}.
\end{equation}
From the definition of the absolute value we have
\begin{equation}
    v_n = - |v_n|
    ,\quad
    v_n - c = -(-|v_n| - c) = -(|v_n| + c),\quad\text{and}\quad v_n + c = -|v_n| + c = -(|v_n| - c),
\end{equation}
so that
\begin{equation}
\absLambM = \text{diag}(|v_n - c|,\,|v_n|,\,|v_n+c|) = \text{diag}(|v_n| + c,\,|v_n|,\,|v_n| - c).
\end{equation}
We expand the first term in \eqref{app:commonCondition}, collect like terms, and apply the form of $\IMinus$, $\absLambM$, and $\Wminus$ above to obtain
\begin{equation}
    \label{app:supercritTerm1}
    \text{\textcircled{1}}
    =
    \frac{c}{2g}
    \begin{pmatrix}
        2 \alpha^2 c |v_n| + \alpha^2 c v_n\\\hdashline
        \alpha^2 c v_1 |v_n| - \alpha c v_1 |v_n| + c^3(1-2\alpha)n_1\\\hdashline
        \alpha^2 c v_2 |v_n| - \alpha c v_2 |v_n| + c^3(1-2\alpha)n_2
    \end{pmatrix}
    -
    \statevec{F}_n.
\end{equation}
This provides the internal flux contributions in the normal direction, $\statevec{F}_n$, as well as the remaining contributions to be built into the boundary flux function.

Next, we consider the boundary data vector ansatz \eqref{eq:GvecAnsatz} and external rotated and scaled primitive variables \eqref{eq:primVarsAnsatz}.
For the supercritical inflow regime we have
\begin{equation}
    \absLambMExt = \text{diag}(|\vnext| + \cext,\, |\vnext|,\, |\vnext| - \cext),
\end{equation}
so that
\begin{equation}
    \Gvec = \sqrt{\absLambMExt}\WminusExt
    =
    \frac{\cext}{2\sqrt{g}}
    \begin{pmatrix}
        \sqrt{|\vnext| +\cext}(\alpha \cext - \vnext)\\[0.1cm]
        \sqrt{2}\sqrt{|\vnext|}\vtext\\[0.1cm]
        \sqrt{|\vnext| -\cext}(\alpha \cext + \vnext)\\[0.1cm]
    \end{pmatrix}.
\end{equation}
To simplify the presentation we use the geometric mean
\begin{equation}
\begin{aligned}
    \sqrt{\absLambM\,\absLambMExt}
    &=
    \text{diag}
    \left(
    \sqrt{(|v_n| + c)(|\vnext| + \cext)},\,\sqrt{|v_n|\,|\vnext|},\,\sqrt{(|v_n| - c)(|\vnext| - \cext)}
    \right)\\
    &=
    \text{diag}
    \left(
    \geo{|v_n| + c},\,\geo{|v_n|},\,\geo{|v_n|-c}
    \right).
\end{aligned}
\end{equation}
With this notation, the second term from \eqref{app:commonCondition} becomes
\begin{equation}
    \label{app:supercritTerm2}
    \text{\textcircled{2}}
    =
    \begin{pmatrix}
        \frac{\alpha}{2g} \cext\vnext(\geo{|v_n| - c} - \geo{|v_n| + c})
        + \frac{\alpha^2}{2g} \cext^2(\geo{|v_n| - c} + \geo{|v_n| + c})\\\hdashline
        \frac{1}{4g} \alpha\cext\vnext v_1 \geo{|v_n| - c} - \geo{|v_n| + c})\\
        +\frac{1}{4g} \alpha^2\cext^2 v_1 (\geo{|v_n| - c} + \geo{|v_n| + c})
        \\
        + \vnext\geo{|v_n|}\geo{h} - \vnext\geo{|v_n|}\geo{h}  n_1\\
        + \frac{1}{2} \vnext(\geo{|v_n| - c} + \geo{|v_n| + c})\geo{h} n_1\\
        + \frac{1}{2} \alpha \cext(\geo{|v_n| - c} - \geo{|v_n| + c}) \geo{h} n_1\\\hdashline
        \frac{1}{4g} \alpha\cext\vnext v_2 (\geo{|v_n| -c} - \geo{|v_n| + c})\\
        +\frac{1}{4g} \alpha^2\cext^2 v_2 (\geo{|v_n| -c} + \geo{|v_n| + c})
        \\
        + \vnext\geo{|v_n|}\geo{h} - \vnext \geo{|v_n|}\geo{h} n_2\\
        + \frac{1}{2} \vnext(\geo{|v_n| -c} +\geo{|v_n| + c})\geo{h}  n_2\\
        + \frac{1}{2} \alpha \cext(\geo{|v_n| -c} -\geo{|v_n| + c}) \geo{h} n_2
    \end{pmatrix}
\end{equation}
We insert the contributions from \eqref{app:supercritTerm1} and \eqref{app:supercritTerm2} into \eqref{app:commonCondition} to have
\begin{equation}
   \begin{aligned}
   \statevec{F}_n^* - \statevec{F}_n
   &=
    \text{\textcircled{1}} - \text{\textcircled{2}} \\
    &=
        \frac{c}{2g}
    \begin{pmatrix}
        2 \alpha^2 c |v_n| + \alpha^2 c v_n\\\hdashline
        \alpha^2 c v_1 |v_n| - \alpha c v_1 |v_n| + c^3(1-2\alpha)n_1\\\hdashline
        \alpha^2 c v_2 |v_n| - \alpha c v_2 |v_n| + c^3(1-2\alpha)n_2
    \end{pmatrix}
    \\
    &\quad -
    \begin{pmatrix}
        \frac{\alpha}{2g} \cext\vnext(\geo{|v_n| - c} - \geo{|v_n| + c})
        + \frac{\alpha^2}{2g} \cext^2(\geo{|v_n| - c} + \geo{|v_n| + c})\\\hdashline
        \frac{1}{4g} \alpha\cext\vnext v_1 \geo{|v_n| - c} - \geo{|v_n| + c})\\
        +\frac{1}{4g} \alpha^2\cext^2 v_1 (\geo{|v_n| - c} + \geo{|v_n| + c})
        \\
        + \vnext\geo{|v_n|}\geo{h} - \vnext\geo{|v_n|}\geo{h}  n_1\\
        + \frac{1}{2} \vnext(\geo{|v_n| - c} + \geo{|v_n| + c})\geo{h} n_1\\
        + \frac{1}{2} \alpha \cext(\geo{|v_n| - c} - \geo{|v_n| + c}) \geo{h} n_1\\\hdashline
        \frac{1}{4g} \alpha\cext\vnext v_2 (\geo{|v_n| -c} - \geo{|v_n| + c})\\
        +\frac{1}{4g} \alpha^2\cext^2 v_2 (\geo{|v_n| -c} + \geo{|v_n| + c})
        \\
        + \vnext\geo{|v_n|}\geo{h} - \vnext \geo{|v_n|}\geo{h} n_2\\
        + \frac{1}{2} \vnext(\geo{|v_n| -c} +\geo{|v_n| + c})\geo{h}  n_2\\
        + \frac{1}{2} \alpha \cext(\geo{|v_n| -c} -\geo{|v_n| + c}) \geo{h} n_2
    \end{pmatrix}    
    -
    \statevec{F}_n.
    \end{aligned}
\end{equation}
The terms above that do not involve $\statevec{F}_n$ are those that define the numerical boundary flux function, $\statevec{F}_n^*$. After many algebraic manipulations that use $\alpha^2 + 2\alpha - 2 = 0$, $v_1 = n_1 v_n - n_2 v_\tau$, and $v_2 = n_2 v_n + n_1 v_\tau$ we arrive at the given expression for the numerical boundary flux function given in \eqref{eq:supercrit-inflow}.







\bibliographystyle{elsarticle-num}

\begin{thebibliography}{10}
\expandafter\ifx\csname url\endcsname\relax
  \def\url#1{\texttt{#1}}\fi
\expandafter\ifx\csname urlprefix\endcsname\relax\def\urlprefix{URL }\fi
\expandafter\ifx\csname href\endcsname\relax
  \def\href#1#2{#2} \def\path#1{#1}\fi

\bibitem{KRAIS2021186}
N.~Krais, A.~Beck, T.~Bolemann, H.~Frank, D.~Flad, G.~Gassner, F.~Hindenlang,
  M.~Hoffmann, T.~Kuhn, M.~Sonntag, C.-D. Munz, {FLEXI}: {A} high order
  discontinuous {G}alerkin framework for hyperbolic--parabolic conservation
  laws, Computers \& Mathematics with Applications 81 (2021) 186--219.
\newblock \href {https://doi.org/10.1016/j.camwa.2020.05.004}
  {\path{doi:10.1016/j.camwa.2020.05.004}}.

\bibitem{schlottkelakemper2021purely}
M.~Schlottke-Lakemper, A.~R. Winters, H.~Ranocha, G.~J. Gassner, A purely
  hyperbolic discontinuous {G}alerkin approach for self-gravitating gas
  dynamics, Journal of Computational Physics 442 (2021) 110467.
\newblock \href {http://arxiv.org/abs/2008.10593} {\path{arXiv:2008.10593}},
  \href {https://doi.org/10.1016/j.jcp.2021.110467}
  {\path{doi:10.1016/j.jcp.2021.110467}}.

\bibitem{FERRER2023108700}
E.~Ferrer, G.~Rubio, G.~Ntoukas, W.~Laskowski, O.~Mari{\~n}o, S.~Colombo,
  A.~Mateo-Gab{\'\i}n, H.~Marbona, F.~{Manrique de Lara}, D.~Huergo,
  J.~Manzanero, A.~Rueda-Ram{\'\i}rez, D.~Kopriva, E.~Valero, {HORSES3D}: {A}
  high-order discontinuous {G}alerkin solver for flow simulations and
  multi-physics applications, Computer Physics Communications 287 (2023)
  108700.
\newblock \href {https://doi.org/10.1016/j.cpc.2023.108700}
  {\path{doi:10.1016/j.cpc.2023.108700}}.

\bibitem{rueda2021entropy}
A.~M. Rueda-Ram{\'\i}rez, S.~Hennemann, F.~J. Hindenlang, A.~R. Winters, G.~J.
  Gassner, An entropy stable nodal discontinuous {G}alerkin method for the
  resistive {MHD} equations. {P}art {II}: {S}ubcell finite volume shock
  capturing, Journal of Computational Physics 444 (2021) 110580.

\bibitem{WARUSZEWSKI2022111507}
M.~Waruszewski, J.~E. Kozdon, L.~C. Wilcox, T.~H. Gibson, F.~X. Giraldo,
  Entropy stable discontinuous {G}alerkin methods for balance laws in
  non-conservative form: Applications to the {E}uler equations with gravity,
  Journal of Computational Physics 468 (2022) 111507.
\newblock \href {https://doi.org/10.1016/j.jcp.2022.111507}
  {\path{doi:10.1016/j.jcp.2022.111507}}.

\bibitem{tadmor1984}
E.~Tadmor, Skew-selfadjoint form for systems of conservation laws, J. Math.
  Anal. Appl. 103~(2) (1984) 428--442.

\bibitem{Tadmor2003}
E.~Tadmor, Entropy stability theory for difference approximations of nonlinear
  conservation laws and related time-dependent problems, Acta Numerica 12
  (2003) 451--512.

\bibitem{jameson2008}
A.~Jameson, Formulation of kinetic energy preserving conservative schemes for
  gas dynamics and direct numerical simulation of one-dimensional viscous
  compressible flow in a shock tube using entropy and kinetic energy preserving
  schemes, Journal of Scientific Computing 34~(3) (2008) 188--208.

\bibitem{pirozzoli2011}
S.~Pirozzoli, Numerical methods for high-speed flows, Annual Review of Fluid
  Mechanics 43 (2011) 163--194.

\bibitem{gassner2018br1}
G.~J. Gassner, A.~R. Winters, F.~J. Hindenlang, D.~A. Kopriva, The {BR1} scheme
  is stable for the compressible {N}avier--{S}tokes equations, Journal of
  Scientific Computing 77 (2018) 154--200.

\bibitem{gassner2016split}
G.~J. Gassner, A.~R. Winters, D.~A. Kopriva, Split form nodal discontinuous
  {G}alerkin schemes with summation-by-parts property for the compressible
  euler equations, Journal of Computational Physics 327 (2016) 39--66.

\bibitem{winters2021}
A.~R. Winters, D.~A. Kopriva, G.~J. Gassner, F.~Hindenlang, Construction of
  modern robust nodal discontinuous {G}alerkin spectral element methods for the
  compressible {N}avier--{S}tokes equations, in: M.~Kronbichler, P.-O. Persson
  (Eds.), Efficient High-Order Discretizations for Computational Fluid
  Dynamics, Springer International Publishing, 2021, pp. 117--196.

\bibitem{nordstrom2022nonlinear}
J.~Nordstr{\"o}m, Nonlinear and linearised primal and dual initial boundary
  value problems: {W}hen are they bounded? {H}ow are they connected?, Journal
  of Computational Physics 455 (2022) 111001.
\newblock \href {https://doi.org/10.1016/j.jcp.2022.111001}
  {\path{doi:10.1016/j.jcp.2022.111001}}.

\bibitem{nordstrom2022linear}
J.~Nordstr{\"o}m, A.~R. Winters, A linear and nonlinear analysis of the shallow
  water equations and its impact on boundary conditions, Journal of
  Computational Physics 463 (2022) 111254.

\bibitem{svard2025entropy}
M.~Sv{\"a}rd, Entropy-stable in-and outflow boundary conditions for the
  compressible {E}uler equations, Journal of Computational Physics 521 (2025)
  113543.

\bibitem{hindenlangWall}
F.~J. Hindenlang, G.~J. Gassner, D.~A. Kopriva, Stability of wall boundary
  condition procedures for discontinuous {G}alerkin spectral element
  approximations of the compressible {E}uler equations, in: S.~J. Sherwin,
  D.~Moxey, J.~Peir{\'o}, P.~E. Vincent, C.~Schwab (Eds.), Spectral and High
  Order Methods for Partial Differential Equations ICOSAHOM 2018, Springer
  International Publishing, Cham, 2020, pp. 3--19.

\bibitem{sbp1}
H.-O. Kreiss, G.~Scherer, Finite element and finite difference methods for
  hyperbolic partial differential equations, in: Mathematical Aspects of Finite
  Elements in Partial Differential Equations, Academic Press, 1974, pp.
  195--212.

\bibitem{nordstrom2017roadmap}
J.~Nordstr{\"o}m, A roadmap to well posed and stable problems in computational
  physics, Journal of Scientific Computing 71~(1) (2017) 365--385.

\bibitem{nordstrom2012weak}
J.~Nordstr{\"o}m, S.~Eriksson, P.~Eliasson, Weak and strong wall boundary
  procedures and convergence to steady-state of the {N}avier--{S}tokes
  equations, Journal of Computational Physics 231~(14) (2012) 4867--4884.

\bibitem{nordstrom2003finite}
J.~Nordstr{\"o}m, K.~Forsberg, C.~Adamsson, P.~Eliasson, Finite volume methods,
  unstructured meshes and strict stability for hyperbolic problems, Applied
  Numerical Mathematics 45~(4) (2003) 453--473.

\bibitem{carpenter2014entropy}
M.~H. Carpenter, T.~C. Fisher, E.~J. Nielsen, S.~H. Frankel, Entropy stable
  spectral collocation schemes for the {N}avier--{S}tokes equations:
  Discontinuous interfaces, SIAM Journal on Scientific Computing 36~(5) (2014)
  B835--B867.

\bibitem{carpenter1996spectral}
M.~H. Carpenter, D.~Gottlieb, Spectral methods on arbitrary grids, Journal of
  Computational Physics 129~(1) (1996) 74--86.

\bibitem{ranocha2016summation}
H.~Ranocha, P.~{\"O}ffner, T.~Sonar, Summation-by-parts operators for
  correction procedure via reconstruction, Journal of Computational Physics 311
  (2016) 299--328.

\bibitem{Gassner:2013ij}
D.~A. Kopriva, G.~Gassner, An energy stable discontinuous {G}alerkin spectral
  element discretization for variable coefficient advection problems, {SIAM
  Journal on Scientific Computing} 36~(4) (2014) A2076--A2099.

\bibitem{kopriva2021}
D.~Kopriva, G.~Gassner, J.~Nordstr{\"o}m, Stability of discontinuous {G}alerkin
  spectral element schemes for wave propagation when the coefficient matrices
  have jumps, Journal of Scientific Computing 88~(1) (2021) 3.

\bibitem{abgrall2020analysis}
R.~Abgrall, J.~Nordstr{\"o}m, P.~{\"O}ffner, S.~Tokareva, Analysis of the
  {SBP-SAT} stabilization for finite element methods part i: linear problems,
  Journal of Scientific Computing 85~(2) (2020) 1--29.

\bibitem{Hanifa2025}
H.~Hanif, J.~Nordstr{\"o}m, A.~G. Malan, Efficiency analysis of continuous and
  discontinuous {G}alerkin finite element methods, AIMS Mathematics 10~(9)
  (2025) 22579--22597.

\bibitem{nordstrom2023nonlinear}
J.~Nordstr{\"o}m, Nonlinear boundary conditions for initial boundary value
  problems with applications in computational fluid dynamics, Journal of
  Computational Physics 498 (2024) 112685.
\newblock \href {https://doi.org/10.1016/j.jcp.2023.112685}
  {\path{doi:10.1016/j.jcp.2023.112685}}.

\bibitem{nordstrom2025open}
J.~Nordstr{\"o}m, Open boundary conditions for nonlinear initial boundary value
  problems, Journal of Computational Physics 530 (2025) 113909.
\newblock \href {https://doi.org/10.1016/j.jcp.2025.113909}
  {\path{doi:10.1016/j.jcp.2025.113909}}.

\bibitem{nordstrom2025linear}
J.~Nordstr\"{o}m, Linear and nonlinear boundary conditions: {W}hat's the
  difference?, Journal of Computational Physics 550 (2025) 114649.
\newblock \href {https://doi.org/10.1016/j.jcp.2025.114649}
  {\path{doi:10.1016/j.jcp.2025.114649}}.

\bibitem{kopriva2009implementing}
D.~A. Kopriva, Implementing spectral methods for partial differential
  equations: {A}lgorithms for scientists and engineers, Springer Science \&
  Business Media, 2009.

\bibitem{gassner2013skew}
G.~J. Gassner, A skew-symmetric discontinuous {G}alerkin spectral element
  discretization and its relation to {SBP}-{SAT} finite difference methods,
  SIAM Journal on Scientific Computing 35~(3) (2013) A1233--A1253.

\bibitem{gustafsson1995time}
B.~Gustafsson, H.-O. Kreiss, J.~Oliger, Time dependent problems and difference
  methods, Vol.~24, John Wiley \& Sons, 1995.

\bibitem{nordstrom2020}
J.~Nordstr\"{o}m, T.~M. Hagstrom, The number of boundary conditions for initial
  boundary value problems, SIAM Journal on Numerical Analysis 58~(5) (2020)
  2818--2828.

\bibitem{horn2012}
R.~A. Horn, C.~R. Johnson, Matrix Analysis, Cambridge University Press, 2012.

\bibitem{arnold2002unified}
D.~N. Arnold, F.~Brezzi, B.~Cockburn, L.~D. Marini, Unified analysis of
  discontinuous {G}alerkin methods for elliptic problems, SIAM journal on
  numerical analysis 39~(5) (2002) 1749--1779.

\bibitem{sudirham2003dgfem}
J.~J. Sudirham, J.~J.~W. van~der Vegt, R.~M.~J. van Damme, A study on
  discontinuous {G}alerkin finite element methods for elliptic problems,
  Memorandum 1690, University of Twente, Faculty of EEMCS (2003).

\bibitem{toro2013riemann}
E.~F. Toro, Riemann solvers and numerical methods for fluid dynamics: {A}
  practical introduction, Springer Science \& Business Media, 2013.

\bibitem{LeVeque2002}
R.~J. LeVeque, Finite Volume Methods for Hyperbolic Problems, Cambridge Texts
  in Applied Mathematics, Cambridge University Press, 2002.
\newblock \href {https://doi.org/10.1017/CBO9780511791253}
  {\path{doi:10.1017/CBO9780511791253}}.

\bibitem{fjordholm2011well}
U.~S. Fjordholm, S.~Mishra, E.~Tadmor, Well-balanced and energy stable schemes
  for the shallow water equations with discontinuous topography, Journal of
  Computational Physics 230~(14) (2011) 5587--5609.

\bibitem{wintermeyer2017entropy}
N.~Wintermeyer, A.~R. Winters, G.~J. Gassner, D.~A. Kopriva, An entropy stable
  nodal discontinuous {G}alerkin method for the two dimensional shallow water
  equations on unstructured curvilinear meshes with discontinuous bathymetry,
  Journal of Computational Physics 340 (2017) 200--242.

\bibitem{oliger1978}
J.~Oliger, A.~Sundstr{\"o}m, Theoretical and practical aspects of some initial
  boundary value problems in fluid dynamics, SIAM Journal on Appleid
  Mathematics 35~(3) (1978) 419--446.

\bibitem{shallowwaterbook}
C.~B. Vreugdenhil, Numerical Methods for Shallow-Water Flow, Vol.~13, Springer
  Science \& Business Media, 2013.

\bibitem{bristeau2001boundary}
M.-O. Bristeau, B.~Coussin, Boundary conditions for the shallow water equations
  solved by kinetic schemes, Tech. Rep. RR-4282, INIRA (2001).

\bibitem{ranocha2022adaptive}
H.~Ranocha, M.~Schlottke-Lakemper, A.~R. Winters, E.~Faulhaber, J.~Chan, G.~J.
  Gassner, Adaptive numerical simulations with {T}rixi.jl: {A} case study of
  {J}ulia for scientific computing, Proceedings of the JuliaCon Conferences
  1~(1) (2022) 77.
\newblock \href {http://arxiv.org/abs/2108.06476} {\path{arXiv:2108.06476}},
  \href {https://doi.org/10.21105/jcon.00077} {\path{doi:10.21105/jcon.00077}}.

\bibitem{winters2025trixi}
A.~R. Winters, P.~Ersing, H.~Ranocha, M.~Schlottke-Lakemper,
  {TrixiShallowWater.jl}: {S}hallow water simulations with {T}rixi.jl,
  \url{https://github.com/trixi-framework/TrixiShallowWater.jl} (2025).
\newblock \href {https://doi.org/10.5281/zenodo.15206520}
  {\path{doi:10.5281/zenodo.15206520}}.

\bibitem{Carpenter&Kennedy:1994}
M.~Carpenter, C.~Kennedy, Fourth-order $2{N}$-storage {R}unge-{K}utta schemes,
  Tech. Rep. NASA TM 109112, {NASA} Langley Research Center (1994).

\bibitem{rackauckas2017differentialequations}
C.~Rackauckas, Q.~Nie, {DifferentialEquations.jl} {--} {A} performant and
  feature-rich ecosystem for solving differential equations in {J}ulia, Journal
  of Open Research Software 5~(1) (2017) 15.
\newblock \href {https://doi.org/10.5334/jors.151}
  {\path{doi:10.5334/jors.151}}.

\bibitem{kopriva2024hohqmesh:joss}
D.~A. Kopriva, A.~R. Winters, M.~Schlottke-Lakemper, J.~A. Schoonover,
  H.~Ranocha, {HOHQM}esh: An all quadrilateral/hexahedral unstructured mesh
  generator for high order elements, Journal of Open Source Software 9~(104)
  (2024) 7476.
\newblock \href {https://doi.org/10.21105/joss.07476}
  {\path{doi:10.21105/joss.07476}}.

\bibitem{kopriva2024hohqmeshjl}
D.~A. Kopriva, A.~R. Winters, M.~Schlottke-Lakemper, H.~Ranocha, {HOHQM}esh.jl:
  A julia frontend to the fortran-based hohqmesh mesh generator for high order
  elements, \url{https://github.com/trixi-framework/HOHQMesh.jl} (2024).
\newblock \href {https://doi.org/10.5281/zenodo.13959071}
  {\path{doi:10.5281/zenodo.13959071}}.

\bibitem{christ2023plots}
S.~Christ, D.~Schwabeneder, C.~Rackauckas, M.~K. Borregaard, T.~Breloff,
  Plots.jl --- a user extendable plotting {API} for the {J}ulia programming
  language, Journal of Open Research Software (2023).
\newblock \href {http://arxiv.org/abs/2204.08775} {\path{arXiv:2204.08775}},
  \href {https://doi.org/10.5334/jors.431} {\path{doi:10.5334/jors.431}}.

\bibitem{ahrens2005paraview}
J.~Ahrens, B.~Geveci, C.~Law, {ParaView}: {A}n end-user tool for large-data
  visualization, in: The Visualization Handbook, Elsevier, 2005, pp. 717--731.

\bibitem{winters2025numericalRepro}
A.~R. Winters, D.~A. Kopriva, J.~Nordstr{\"o}m, Reproducibility repository for
  ``{N}umerical flux functions that give provable bounds for nonlinear initial
  boundary value problems with open boundaries",
  \url{https://github.com/andrewwinters5000/2025_nonlinear_bndy_flux} (2025).
\newblock \href {https://doi.org/10.5281/zenodo.17533030}
  {\path{doi:10.5281/zenodo.17533030}}.

\bibitem{castro2008finite}
M.~J. Castro, J.~A. L{\'o}pez, C.~Par{\'e}s, Finite volume simulation of the
  geostrophic adjustment in a rotating shallow-water system, SIAM Journal on
  Scientific Computing 31~(1) (2008) 444--477.

\bibitem{kuo2000nonlinear}
A.~C. Kuo, L.~M. Polvani, Nonlinear geostrophic adjustment, cyclone/anticyclone
  asymmetry, and potential vorticity rearrangement, Physics of Fluids 12~(5)
  (2000) 1087--1100.

\bibitem{gill1982}
A.~E. Gill, Atmosphere---Ocean Dynamics, Vol.~31 of International Geophysics
  Series, Academic Press, New York, 1982.

\bibitem{navas20182d}
A.~Navas-Montilla, J.~Murillo, 2{D} well-balanced augmented {ADER} schemes for
  the shallow water equations with bed elevation and extension to the rotating
  frame, Journal of Computational Physics 372 (2018) 316--348.

\bibitem{zhang2024well}
N.~Zhang, Well-balanced finite difference {WENO}-{AO} scheme for rotating
  shallow water equations with {C}oriolis force, Computers \& Fluids 273 (2024)
  106209.

\bibitem{ginting2019central}
B.~M. Ginting, Central-upwind scheme for {2D} turbulent shallow flows using
  high-resolution meshes with scalable wall functions, Computers \& Fluids 179
  (2019) 394--421.

\bibitem{song2011robust}
L.~Song, J.~Zhou, J.~Guo, Q.~Zou, Y.~Liu, A robust well-balanced finite volume
  model for shallow water flows with wetting and drying over irregular terrain,
  Advances in Water Resources 34~(7) (2011) 915--932.

\bibitem{hou2013robust}
J.~Hou, F.~Simons, M.~Mahgoub, R.~Hinkelmann, A robust well-balanced model on
  unstructured grids for shallow water flows with wetting and drying over
  complex topography, Computer methods in applied mechanics and engineering 257
  (2013) 126--149.

\bibitem{yoon2004finite}
T.~H. Yoon, S.-K. Kang, Finite volume model for two-dimensional shallow water
  flows on unstructured grids, Journal of Hydraulic Engineering 130~(7) (2004)
  678--688.

\bibitem{kuiry2008finite}
S.~N. Kuiry, K.~Pramanik, D.~Sen, Finite volume model for shallow water
  equations with improved treatment of source terms, Journal of Hydraulic
  Engineering 134~(2) (2008) 231--242.

\bibitem{sleigh1998unstructured}
P.~Sleigh, P.~Gaskell, M.~Berzins, N.~Wright, An unstructured finite-volume
  algorithm for predicting flow in rivers and estuaries, Computers \& Fluids
  27~(4) (1998) 479--508.

\bibitem{harten1983upstream}
A.~Harten, P.~D. Lax, B.~v. Leer, On upstream differencing and {G}odunov-type
  schemes for hyperbolic conservation laws, SIAM review 25~(1) (1983) 35--61.

\end{thebibliography}

\end{document}